\def\ps@pprintTitle{%
 \let\@oddhead\@empty
 \let\@evenhead\@empty
 \def\@oddfoot{}%
 \let\@evenfoot\@oddfoot}
\theoremstyle{plain}
\def\xx{{\bf x}}
\def\+{{\oplus}}
\newcommand{\Ff}{{\mathbb F}}
\newcommand{\rme}{\mathrm{e}}
\newcommand{\rmd}{\mathrm{d}}
\newcommand{\rmi}{\mathrm{i}}
\newcounter{comments}
\newtheorem*{theorem*}{Theorem}
\newtheorem{thm}{Theorem}[section]
\newtheorem{theorem}[thm]{Theorem}
\newtheorem{lemma}[thm]{Lemma}
\newtheorem{cor}[thm]{Corollary}
\newtheorem{prop}[thm]{Proposition}
\newtheorem{defn}[thm]{Definition}
\newtheorem{remark}[thm]{Remark}
\numberwithin{equation}{section}
\begin{document}

\title{Multistate Nested Canalizing Functions and Their Networks}

\author[uzh,usz]{C. Kadelka\corref{cor1}}
\ead{kadelka.claus@virology.uzh.ch}

\author[ws]{Y. Li}
\ead{liyu@wssu.edu}

\author[bsse]{J. Kuipers}
\ead{jack.kuipers@bsse.ethz.ch}

\author[ws]{J.O. Adeyeye}
\ead{adeyeyej@wssu.edu}

\author[uc,jac]{R. Laubenbacher}
\ead{laubenbacher@uchc.edu}

\cortext[cor1]{Corresponding author}

\address[uzh]{Institute of Medical Virology, University of Zurich, 8006 Zurich, Switzerland}
\address[usz]{Division of Infectious Diseases and Hospital Epidemiology, University Hospital Zurich, 8091 Zurich, Switzerland}
\address[ws]{Department of Mathematics, Winston-Salem State University, NC 27110, USA}
\address[bsse]{D-BSSE, ETH Zurich, Mattenstrasse 26, 4058 Basel, Switzerland}
\address[uc]{Center for Quantitative Medicine, University of Connecticut Health Center, Farmington, CT 06030, USA}
\address[jac]{Jackson Laboratory for Genomic Medicine, Farmington, CT 06030, USA}

\begin{keyword}
robustness \sep stability \sep Derrida plot \sep discrete dynamical system \sep canalization \sep nested canalizing function 
\end{keyword}

\begin{abstract}

This paper provides a collection of mathematical and computational tools for the study of robustness in nonlinear gene regulatory networks, represented by time- and state-discrete dynamical systems taking on multiple states.
The focus is on networks governed by nested canalizing functions (NCFs), first introduced in the Boolean context by S. Kauffman.
After giving a general definition of NCFs we analyze the class of such functions. We derive a formula for the normalized average $c$-sensitivities of multistate NCFs, which enables the calculation of the Derrida plot, a popular measure of network stability. We also provide a unique canonical parametrized polynomial form of NCFs. This form has several consequences. We can easily generate NCFs for varying parameter choices, and derive a closed form formula for the number of such functions in a given number of variables, as well as an asymptotic formula. Finally, we compute the number of equivalence classes of NCFs under permutation of variables.
Together, the results of the paper represent a useful mathematical framework for the study of NCFs and their dynamic networks.
\end{abstract}

\maketitle

\section{Introduction}
\label{sec-intro}

{
Many biological networks, in particular gene regulatory networks, are inherently stochastic in nature \cite{Elowitz02,Volfson06} and questions regarding the robustness of such networks have received much attention \cite{Strogatz, Boccaletti, Kitano}. The properties of molecular networks are frequently studied using time- and state-discrete models like Boolean and logical network models that have been used for this purpose since the 1970s \cite{Kau4, Thomas73}. Many of the more recently published discrete dynamical models include, however, variables that take on more than two states, needed to capture mechanisms that are not binary in nature; see, e.g., \cite{Albert, Brandon, Poltz, Thieffry}.

The impact of structural and topological network properties on the resulting dynamics is of particular interest when trying to understand the robustness of molecular networks, see, e.g., \cite{Li04,Fretter,Balleza}. One popular measure of network robustness is the Derrida value of a network \cite{Derrida1}. It assesses how perturbations, caused, for instance, by changes in the environmental conditions, propagate through the network.

Closely related to questions regarding robustness, Waddington had already in the 1940s developed the concept of canalization as a possible explanation of why the outcome of embryonal development leads to predictable phenotypes in the face of widely varying environmental conditions as well as frequent genetic mutations \cite{Wad}. Different phenotypes can be thought of as ``valleys" into which development is channeled by canalizing mechanisms, inferring protection from ubiquitous perturbations. Kauffman was the first to use a version of this concept in Boolean network modeling. He studied gene regulatory networks with canalizing Boolean functions~\cite{Kau74}, as well as the special subclass of so-called nested canalizing functions (NCFs)~\cite{Kau2}. Since then, these functions and the dynamics of the networks they govern have been extensively studied \cite{Kau3,Abd2,Karlsson07,Kochi12,Mur,Jansen13,Kadelka_Derrida16}. The discovery of some invariants of a canalizing function enabled a further categorization of the set of all canalizing Boolean functions, see, e.g., \cite{Yua1,Layne12,He16}. Due to the increased need to model update functions with multistate variables, the concept of nested canalization has been generalized to functions taking values from any finitie field \cite{Mur,Mur2}.
A \emph{canalizing} function possesses at least one input variable such that, if this variable takes on a certain ``canalizing'' value, then the output value is already determined, regardless of the values of the remaining input variables. If this variable takes on a non-canalizing value, and there is a second variable with this same property, and so on, then the function is \emph{nested canalizing}.

The number of Boolean (nested) canalizing functions are known~\cite{Abd2,Win2}, and for the multistate case there exists at least a recursive formula for the number of NCFs \cite{Mur2}; the probability that a random Boolean function is canalizing decreases rapidly as the number of inputs increases (less than $0.5$ for three inputs, and less than $0.01$ for five inputs). Nested canalization is more restrictive and the probabilities are even lower ($0.25$ for three inputs, and less than $10^{-5}$ for five inputs). Interestingly, an analysis of published Boolean models of molecular networks revealed that all $139$ investigated rules with at least three inputs are canalizing and even that $133$ are nested canalizing~\cite{Kau2, Mur, Harris}. (Unfortunately, a similar study has yet to be conducted for multistate models.) 
These findings clearly motivate the study of general multistate NCFs in the context of understanding the regulatory logic of gene networks. 
}

In this paper, we introduce a collection of computational tools that can aid in the construction and analysis of multistate discrete dynamical systems, with a focus on nested canalization and robustness. Specifically, the novel contributions of the paper are as follows:
\begin{itemize}
\item
Multistate, rather than Boolean, models of gene regulatory and signaling networks are increasingly used to model biological phenomena \cite{Albert, Brandon, Poltz, Thieffry}. This suggests that theoretical studies of these, such as robustness, should be carried out in the multistate context. (We are not aware of any arguments that show that Boolean networks are sufficiently general for this purpose.) Therefore, we treat the multistate case and provide general results that can be used in that context. 
\item 
The Derrida values of a network can be expressed as a weighted sum of the normalized average $c$-sensitivities of the canalizing update functions \cite{Kadelka_Derrida16}. We provide a formula for the normalized average $c$-sensitivities of multistate NCFs. This greatly simplifies the application of the Derrida plot for robustness analyses of multistate nested canalizing networks, which otherwise requires extensive simulations (difficult or infeasible for large networks).
\item
In \cite{Alan}, the authors showed that polynomial functions over a finite field serve as a rich mathematical framework for the modeling of discrete dynamical systems. Using this framework, we provide a canonical parametrized polynomial form for multistate NCFs. This canonical form can be used to easily generate such functions, a nontrivial task for functions with many variables; as many robustness investigations require a large number of randomly generated networks, this parametrization is particularly useful. 
\item
Using the polynomial framework, we present a closed formula for the number of multistate NCFs in a given number of variables, in terms of the Sterling numbers of the second kind, and derive an explicit exponential generating function for them. From this generating function, we can derive a simple asymptotic approximation of the number of NCFs. This allows to carry out asymptotic studies, which we also provide. Knowing the proportion of NCFs among all functions provides information about the richness of this class of functions for the purpose of capturing a variety of regulatory mechanisms. 
\item
We compute the number of equivalence classes of NCFs under permutation of variables. This question has received recent attention \cite{Reichhardt,Rocha}. 
\end{itemize}

Together, these results provide a detailed characterization of multistate NCFs, including explicit formulas that tell us how prevalent these functions are, and that simplify the use of the common Derrida value metric for robustness analyses of dynamic networks constructed from these functions. We conclude the paper with a discussion of avenues for future work as well as problems and limitations in generalizing the concept of nested canalization.


\section{The concept of canalization}\label{2} 
This paper investigates multistate functions with inputs and outputs in finite sets. 
Functions appearing in models of gene regulatory networks typically take values in sets of different cardinality, as variables can take on different numbers of states; for instance in logical models, many variables are Boolean, while some take on $3$ or more states \cite{Thomas91}. This complicates the general mathematical framework for such functions and their networks. It is explained in detail in \cite{Alan, Lau} that one can make the assumption that the functions we are considering take their inputs and produce their outputs in a finite field with a prime number of elements, rather than in a general finite set. This assumption does not limit the generality of the considered functions because the domain and range of any function can be enlarged, if needed, to a finite field with a prime number of elements by adding ``dummy" states to the domain. Thus, {\bf{throughout this paper, we will make the assumption that all functions are defined over a finite field $\Ff=\Ff_{p}$ with a prime number $p$ of elements.}} (Such a field can be thought of as the set of integers modulo $p$, with the usual addition and multiplication modulo $p$.) The Boolean case corresponds, of course, to $p=2$. An important consequence of this assumption, used heavily here, is that any function $f: \Ff\times\cdots\times\Ff\longrightarrow\Ff$ can be represented by a polynomial function with coefficients in $\Ff$ \cite{Alan}. 

In this section we review some concepts and definitions from \cite{Mur, Mur2} to introduce the computational concept of \emph{canalization}. 
\begin{defn} \label{def2.1} 
A function $f(x_{1},x_{2},\ldots, x_{n})$ is essential in the variable $x_{i}$ if there exist $r, s\in\Ff$ and $(x_1, \ldots, x_{i-1}, x_{i+1}, \ldots, x_n) \in \Ff^{n-1}$ such that
\[f(x_{1},\ldots,x_{i-1},r,x_{i+1},\ldots,x_{n})\neq f(x_{1},\ldots,x_{i-1},s,x_{i+1},\ldots,x_{n}).\]
\end{defn}

\begin{defn}\label{def2.2} 
A  function $f: \mathbb{F}^n \rightarrow \mathbb{F}$ is $\langle i:a:b\rangle$ canalizing if there exist a variable $x_i$, $a,b \in \mathbb{F}$ and a function $g(x_1,\ldots,x_n)$ such that
$$f(x_1,\ldots,x_n) = 
 \begin{cases}
  b & \ \text{if} \  x_i = a \\
  g\not\equiv b & \ \text{if} \ x_i \neq a,
  \end{cases}
$$
in which case $x_i$ is called a canalizing variable, the input $a$ is the canalizing input, and the output value $b$ when $x_i=a$ is the corresponding canalized output.
\end{defn}

%

We now assume that $\Ff=\{0,1,\ldots,p-1\}$ is ordered, in the natural order $0<1<\cdots <p-1$. 
A proper subset $S$ of $\Ff$ is called a segment if and only if $S=\{0,\ldots,j\}$ or $S^c=\Ff-S=\{0,\ldots,j\}$ for some $0\leq j<p-1$. Hence, a proper subset $S$ is a segment if and only if $S^c$ is a segment. 

\begin{defn}\label{def2.3}\cite{Mur2}
Let $f: \Ff^n \rightarrow \Ff$ be a  function in $n$ variables, let $\sigma \in \mathcal{S}_n$ be a permutation of $\{1,2,\ldots,n\}$ and let $S_i$ be segments of $\Ff$, $i=1,\ldots,n$. Then $f$ is a \emph{nested canalizing function (NCF)} in the variable order $x_{\sigma(1)},\ldots,x_{\sigma(n)}$ with canalizing input sets $S_{1},\ldots,S_{n}$ and canalized output values $b_{1},\ldots,b_{n},b_{n+1}$ with $b_n\neq b_{n+1}$ if it can be represented in the form
\[f(x_{1},\ldots,x_{n})=
\left\{\begin{array}[c]{ll}
b_{1} & x_{\sigma(1)}\in S_{1},\\
b_{2} & x_{\sigma(1)} \notin{ S_{1}}, x_{\sigma(2)}\in S_{2},\\
b_{3} & x_{\sigma(1)}\notin{ S_{1}}, x_{\sigma(2)} \notin{ S_{2}}, x_{\sigma(3)}\in S_{3},\\
\vdots  & \\
b_{n} & x_{\sigma(1)} \notin{ S_{1}},\ldots,x_{\sigma(n-1)} \notin{ S_{n-1}}, x_{\sigma(n)}\in S_{n},\\
{b_{n+1}} & x_{\sigma(1)} \notin{ S_{1}},\ldots,x_{\sigma(n-1)} \notin{ S_{n-1}}, x_{\sigma(n)}\notin{S_{n}}.
\end{array}\right.\]

In short, the function $f$ is said to be nested canalizing if $f$ is nested canalizing in some variable order with some canalizing input sets and some canalized output values.
\end{defn}

Let $\mathbb{S}=(S_{1},S_{2},\ldots,S_{n})$ and $\beta=(b_{1},b_{2},\ldots,b_{n+1})$ with $b_n\neq b_{n+1}$. We say that $f$ is $\{\sigma:\mathbb{S}:\beta\}$ NCF if it is nested canalizing in the variable order $x_{\sigma(1)},\ldots,x_{\sigma(n)}$, 
with canalizing input sets $\mathbb{S}=(S_{1},\ldots,S_{n})$ and canalized output values $\beta=(b_{1},\ldots, b_{n+1})$.

This definition immediately implies the following technical result, used subsequently. 

\begin{prop}\label{prop2.1}
A function $f$ is $\{\sigma:\mathbb{S}:\beta\}$ NCF  if and only if  $f$ is
$\{\sigma:\mathbb{S'}:\beta'\}$ NCF, where $\mathbb{S'}=(S_{1},S_{2},\ldots,{S_n}^c)$ and $\beta'=(b_{1},b_{2},\ldots,b_{n-2},b_{n+1},b_n)$.
\end{prop}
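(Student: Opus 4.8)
The plan is to unwind Definition~\ref{def2.3} for both triples $\{\sigma:\mathbb{S}:\beta\}$ and $\{\sigma:\mathbb{S}':\beta'\}$ and observe that they define the same function on all of $\Ff^n$. The key observation is that the last two lines of the nested-canalizing cascade partition the set
\[
U=\{(x_1,\ldots,x_n):x_{\sigma(1)}\notin S_1,\ldots,x_{\sigma(n-1)}\notin S_{n-1}\}
\]
into exactly two pieces: the inputs with $x_{\sigma(n)}\in S_n$ (on which $f=b_n$) and the inputs with $x_{\sigma(n)}\notin S_n$ (on which $f=b_{n+1}$). Since $S_n$ is a proper subset of $\Ff$, both pieces are nonempty, so $f$ genuinely takes both values $b_n$ and $b_{n+1}$ on $U$, which is why the condition $b_n\neq b_{n+1}$ is needed.

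First I would note that on the complement $\Ff^n\setminus U$ both representations agree term-by-term: the first $n-1$ lines of the two cascades use the same segments $S_1,\ldots,S_{n-1}$, the same permutation $\sigma$, and the same outputs $b_1,\ldots,b_{n-1}$ (recall $\beta'$ only swaps the last two entries), so $f$ is unaffected outside $U$. Then I would check the two representations on $U$. For the primed representation the relevant clauses read: output $b_{n+1}$ (the new $b_n'$, in position $n$ of $\beta'$) when $x_{\sigma(n)}\in S_n^c$, and output $b_n$ (the new $b_{n+1}'$) when $x_{\sigma(n)}\notin S_n^c$, i.e.\ when $x_{\sigma(n)}\in S_n$. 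This is exactly the same assignment as the unprimed representation, just with the roles of ``$\in$'' and ``$\notin$'' interchanged via the set identity $x_{\sigma(n)}\notin S_n \iff x_{\sigma(n)}\in S_n^c$. Hence the two piecewise definitions produce identical outputs everywhere.

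Two small bookkeeping points complete the argument. One must confirm that $S_n^c$ is again a segment: this is exactly the remark preceding Definition~\ref{def2.3}, that a proper subset $S$ is a segment if and only if $S^c$ is. One must also confirm that $\beta'$ is an admissible canalized-output vector, i.e.\ its last two entries differ; but its last two entries are $b_{n+1},b_n$, and $b_n\neq b_{n+1}$ by hypothesis, so $b_{n+1}\neq b_n$ as required. The implication is symmetric --- applying the same swap again returns the original triple (using $(S_n^c)^c=S_n$) --- so the ``if and only if'' holds. I do not anticipate a genuine obstacle here; the only thing to be careful about is keeping straight that $\beta$ has length $n+1$ while $\mathbb{S}$ has length $n$, so that the swap of ``the last two canalized outputs'' pairs $b_{n+1}$ with $b_n$ while the single modified segment is $S_n$.
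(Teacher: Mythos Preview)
Your proposal is correct and matches the paper's approach: the paper gives no separate proof, merely noting that the proposition ``immediately follows'' from Definition~\ref{def2.3}, and your argument is precisely that unwinding of the definition. Your care in verifying that $S_n^c$ is again a segment and that the swapped output vector still satisfies the $b_n'\neq b_{n+1}'$ requirement is appropriate bookkeeping that the paper leaves implicit.
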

%
%


\section{Normalized average $c$-sensitivities}\label{5}
Gene regulatory networks must be robust to small perturbations in order to cope with the ubiquitous changes in environmental conditions. The Derrida plot is a common technique to quantify the robustness of a discrete dynamical system. It describes how a network perturbation of a certain size propagates on average over time~\cite{Derrida1}. A system operates in the ordered regime if a small perturbation vanishes on average over time. Such networks typically possess many steady states and short limit cycles. A system in which a small perturbation typically amplifies over time is in the chaotic regime, often possessing long limit cycles. Lastly, if a small perturbation remains on average of similar size, the system operates close to the so-called critical threshold. Working at this ``edge of chaos'' seems essential for many biological systems; it provides robustness to withstand perturbations caused by environmental changes but also flexibility to allow adaptation~\cite{Balleza, Nykter}.

In \cite{Kadelka_Derrida16}, the $c$-sensitivity of a function was introduced as a generalization of the popular concept of sensitivity. It measures how likely a simultaneous change in $c$ inputs of a function leads to a change in the output, and can be directly adopted to the multistate case.

\begin{defn}\label{def_c_sens_multi}
Any vector that differs at exactly $c$ bits from a given vector $\mathbf x$ is called a $c$-Hamming neighbor of $\mathbf x$. Let $f: \Ff^n \rightarrow \Ff$. The $c$-sensitivity of $f$ on $\mathbf x$ is defined as the number of $c$-Hamming neighbors of $\mathbf x$ on which the function value is different from its value on $\mathbf x$. That is,
$$S^f_c(\mathbf x) = \sum_{\substack{I \subseteq \{1,2,\ldots,n\}\\ | I | = c}} \chi[f(\mathbf x) \neq f(\mathbf x \oplus e_I)],$$
where $\chi$ is an indicator function, $\oplus$ is addition modulo $2$ and $e_I$ is a vector with nonzero entries at all indices in $I$ and $0$ everywhere else. Assuming a uniform distribution of $\mathbf x$,
$$S^f_c = \mathbb{E}[S^f_c(\mathbf x)] = \frac 1{p^n}  \sum_{\mathbf x \in \Ff^n} \sum_{\substack{I \subseteq \{1,2,\ldots,n\}\\ | I | = c}}\chi[f(\mathbf x) \neq f(\mathbf x \oplus e_I)]$$ 
is the average $c$-sensitivity of $f$. The range of $S^f_c$ is $[0,\binom nc]$. Let us therefore define the normalized average $c$-sensitivity of $f$ as $$q^f_c = \frac{S^f}{\binom nc} \in [0,1].$$
\end{defn}

It was also shown in \cite{Kadelka_Derrida16} that the Derrida values of a network can be expressed as a weighted sum of the normalized average $c$-sensitivities of its canalizing update functions. Since the Hamming weight of two multistate vectors is still defined as the number of bits where the vectors differ, the Derrida values of a network governed by multistate functions are also defined like in the Boolean case \cite{Kadelka_Derrida16}. The following theorem provides a formula for the normalized average $c$-sensitivities of multistate NCFs, which enables the calculation of the Derrida plot for any network governed by multistate NCFs - a task that otherwise requires extensive simulations, which is difficult or infeasible for large networks.

\begin{theorem}\label{thm_derrida}
For $p\geq 2$, the normalized average $c$-sensitivitiy of a multistate NCF $f: \Ff_p^n \rightarrow \Ff_p$ is
\begin{align*}
q_c^f &= \frac{p+1}{3p} \left( \frac{c2^c}{n2^n}\left(\frac{p}{p-1}-{}_2F_1\Big[1,n;n+1-c;\frac 12\Big]\right) \left(\frac{p-2}{3p-3}\right)^{c-1} \right. \\
& \qquad \qquad \left. + \sum_{i=1}^c \frac{\binom{n-i}{c-i}}{\binom{n}{c}} {}_2F_1\Big[i,c-n;i-n;\frac 12\Big] \left(\frac{p-2}{3p-3}\right)^{i-1}   \right),
\end{align*}
with ${}_2F_1$ the hypergeometric function.
\end{theorem}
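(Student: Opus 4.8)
The plan is to compute $q_c^f$ directly from the nested structure of $f$, by turning it into a collision probability and conditioning on where a perturbed input first ``branches away'' from the original. Since the class of NCFs and the quantity $q_c^f$ are unchanged by a permutation of the input coordinates, assume $\sigma=\mathrm{id}$, and write $f$ as in Definition~\ref{def2.3} with canalizing sets $S_1,\dots,S_n$ and outputs $b_1,\dots,b_{n+1}$. For $\mathbf x\in\Ff_p^n$ put $D(\mathbf x)=\min\{k:x_k\in S_k\}$, with $D(\mathbf x)=n+1$ if no such $k$ exists; then $f(\mathbf x)=b_{D(\mathbf x)}$. Taking $\mathbf x$ uniform, a $c$-subset $I=\{j_1<\dots<j_c\}$ of perturbed coordinates uniform, and $\mathbf y$ obtained from $\mathbf x$ by resampling the coordinates $x_j$, $j\in I$, one has $q_c^f=\Pr\bigl[b_{D(\mathbf x)}\neq b_{D(\mathbf y)}\bigr]$.

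The second step is the branching analysis. Because $\mathbf x$ and $\mathbf y$ agree off $I$, the test $x_k\in S_k$ has the same answer on both vectors whenever $k\notin I$, so $D(\mathbf x)$ and $D(\mathbf y)$ can only diverge at a perturbed index. Condition on the least $j_\ell\in I$ at which at least one of $x_{j_\ell}\in S_{j_\ell}$, $y_{j_\ell}\in S_{j_\ell}$ holds. A short case split at $j_\ell$ (both canalize, or exactly one canalizes) shows that a value change can occur only in the second case, where one of $f(\mathbf x),f(\mathbf y)$ equals $b_{j_\ell}$ and the other equals $b_m$ for the depth $m>j_\ell$ at which the surviving branch finally stops --- the shared, unperturbed coordinates between $j_\ell$ and $m$ keep the two branches in lock-step there. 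Summing over the splitting index $\ell=1,\dots,c$ and over $m$ (which runs over the layers of $f$ beyond position $j_\ell$, including the default layer $b_{n+1}$) produces a finite double sum, in which the probability that a random $c$-set contains the first $i$ coordinates contributes the ratios $\binom{n-i}{c-i}/\binom nc$.

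Third, one evaluates the per-coordinate and per-layer probabilities and recognizes the sums. Averaging expressions such as $|S|/p$, $|S|(p-|S|)/p^2$ and $\binom{|S|}{2}/\binom{p}{2}$ over the $2(p-1)$ segments $S$ (Proposition~\ref{prop2.1} being what allows one to treat $S$ and $S^c$ on an equal footing) yields the constants in the statement: $\tfrac{p-2}{3p-3}$ is the probability that two distinct field values both avoid a random segment, which is why the power $\left(\tfrac{p-2}{3p-3}\right)^{i-1}$ records the $i-1$ perturbed coordinates the branches traverse together before splitting; $\tfrac{p+1}{3p}$ is the ``a change actually happens at the split'' probability and factors out in front; and the numeric $\tfrac12$ becomes the argument of every ${}_2F_1$. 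Carrying out the inner sum over the surviving depth $m$ collapses each contribution into a Gauss series $\sum_k\frac{(a)_k(b)_k}{(c)_k\,k!}\bigl(\tfrac12\bigr)^k$, giving the two ${}_2F_1$ terms; the term with the $c2^c/(n2^n)$ weight and the bracket $\tfrac{p}{p-1}-{}_2F_1[1,n;n+1-c;\tfrac12]$ is the portion of the sum in which the surviving branch never canalizes and reaches the default output $b_{n+1}$.

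I expect the main obstacle to be the bookkeeping rather than any single probabilistic estimate: arranging the double sum over (splitting index, surviving depth) so that the dependence on the particular sets $S_i$ and values $b_i$ collapses onto the universal constants above, and then the algebraic consolidation that rewrites the result as the stated combination of hypergeometric functions --- in particular matching the truncation of ${}_2F_1[i,c-n;i-n;\tfrac12]$ with the range $1\le i\le c$ and checking the boundary cases $c=n$ and $p=2$ (where $\tfrac{p-2}{3p-3}=0$ removes every term but $i=1$). Before trusting the final simplification I would compare the formula against a brute-force computation for $n\le 3$ and against simulation for larger $n$ and $p$.
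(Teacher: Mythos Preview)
Your approach is essentially the same as the paper's: both condition on the first perturbed coordinate at which the evaluation paths of $\mathbf x$ and $\mathbf y$ diverge, average over uniformly random segments and canalized outputs to obtain the constants $\tfrac{p+1}{3p}$, $\tfrac{p-2}{3p-3}$, and $\tfrac12$ (the paper packages these as the auxiliary functions $\phi_1,\phi_2,\phi_3,\phi_4$), and then collapse the resulting double sum into the two hypergeometric expressions. Two small points to tighten: your ``resampling'' should be replaced by ``choose $y_j$ uniformly from $\Ff_p\setminus\{x_j\}$'' so that $\mathbf y$ is a genuine $c$-Hamming neighbor, and you should make explicit (as the paper does via $\phi_3$) that the non-perturbed coordinates below $j_\ell$ must also fail to canalize---both you and the paper are, in effect, computing an expectation over uniformly random NCFs rather than $q_c^f$ for a fixed $f$, which is why the averaging over segments and over the $b_i$ is legitimate.
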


\begin{proof}
See Appendix.
\end{proof}

For $p=2$, this formula reduces to the same simple equation as in \cite[Corrollary 3.4]{Kadelka_Derrida16}.



\section{Characterization of nested canalizing functions}\label{3}
As mentioned in the introduction, it is important to better understand the class of multistate NCFs, in particular
their density among all multistate functions. These questions have been largely addressed in the Boolean case,
but little information is available in the more general case. The main results in this section include a canonical parametrized representation of NCFs as a particular form of polynomial function. 
This representation is then used to derive a closed formula for the number of multistate NCFs in a given number of variables, as well as an asymptotic formula as the number of variables grows.
An important practical application of the polynomial form of NCFs is that it allows the easy generation of such functions by choosing a particular collection of parameters.
This is very helpful in simulation studies involving large numbers of networks governed by NCFs.
Finally, we derive a formula for the number of equivalence classes of NCFs under permutation of variables, a question that has received recent interest \cite{Reichhardt,Rocha}. 

\medskip\noindent
In the Boolean case, the extended monomial plays an important role in deriving a polynomial form of NCFs \cite{Yua1}. In the multistate case, the product of indicator functions, also used in \cite{Mur2}, takes over this role. 

\begin{defn}\label{def3.1}
Given a proper subset $S$ of $\Ff$, the indicator function (of $S^c$) is defined as
\[Q_S(x)=\left\{\begin{array}[c]{ll}
0 & x\in S,\\
1 & x\in S^c.\end{array}\right.\]
\end{defn}

The following theorem gives an algebraic characterization of NCFs.

\begin{thm}\label{th3.1}
For $n\geq2$, the function $f(x_{1},\ldots,x_{n})$ is nested canalizing if and only if  it can be uniquely written as
\begin{equation}\label{eq3.1}
f(x_{1},\ldots,x_{n})=M_{1}\Big(M_{2}\Big(\cdots\big(M_{r-1}(B_{r+1}M_{r}+B_r )+B_{r-1}\big)\cdots\Big)+B_2\Big)+B_1,
\end{equation}
where each $M_{i}$ is a product of indicator functions of disjoint sets of variables. More precisely, 

\begin{itemize}
\item $k_{1} + \cdots + k_{r}=n$, and $k_{i}\geq1$ for all $i=1,\ldots,r$
\item For all $i=1,\ldots,r$, $M_{i}=\prod_{j \in A_i}(Q_{S_j}(x_j))$, where $A_i \subseteq \{1,\ldots,n\}, |A_i| = k_i, A_{i_1} \cap A_{i_2} = \emptyset$ if $i_1 \neq i_2$, $A_1 \mathbin{\dot{\cup}} \ldots \mathbin{\dot{\cup}} A_r = \{1, \ldots, n\}$, and $S_1, \ldots, S_n$ are segments of $\Ff$
\item $B_1\in \Ff, B_2,\ldots,B_{r+1} \in \Ff - \{0\}$
\item if $k_r=1$, then $B_{r+1}+B_r\neq 0$
\end{itemize}

\end{thm}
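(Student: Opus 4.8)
The plan is to prove the statement by induction on $n$: sufficiency is a direct evaluation, while necessity together with uniqueness is obtained by ``peeling off'' the first canalizing variable, with the single-variable case serving as the base.

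\textbf{Sufficiency.} Suppose $f$ is given by \eqref{eq3.1} subject to the four listed conditions. Choose a permutation $\sigma$ listing the indices of $A_1$ first, then those of $A_2$, and so on; I would check by substitution that $f$ then has the shape of Definition~\ref{def2.3} for this $\sigma$ and the segments $S_j$. Indeed, if $x_j\in S_j$ for some $j\in A_1$ then $M_1=0$ and $f=B_1$; more generally, when all variables of layers $1,\dots,i-1$ are non-canalizing (so $M_1=\cdots=M_{i-1}=1$) and some variable of layer $i$ is canalizing (so $M_i=0$), the nested expression collapses to the partial sum $B_1+\cdots+B_i$, while if every variable is non-canalizing it collapses to $B_1+\cdots+B_{r+1}$. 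Thus the canalized outputs of Definition~\ref{def2.3} are constant on each layer and equal to these partial sums; $B_{r+1}\neq0$ gives $b_n\neq b_{n+1}$, and $B_2,\dots,B_r\neq0$ keep consecutive layers distinct. (The hypothesis $B_{r+1}+B_r\neq0$ when $k_r=1$ plays no role here; it is a normalization used only for uniqueness.)

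\textbf{Necessity, existence of the form.} Assume $f$ is $\{\sigma:\mathbb S:\beta\}$ nested canalizing; relabeling, take $\sigma=\mathrm{id}$. Since $f\equiv b_1$ exactly on $\{x_1\in S_1\}$, we have $f-b_1=Q_{S_1}(x_1)\,g$ with $g=(f-b_1)|_{x_1\notin S_1}$ depending only on $x_2,\dots,x_n$; as $b_n\neq b_{n+1}$, $g$ is a non-constant nested canalizing function in those $n-1$ variables. By the inductive hypothesis — or the base case when $n=2$ — write $g$ in the form \eqref{eq3.1} with leading constant $B_1^{(g)}$. If $B_1^{(g)}\neq0$, equivalently $b_1\neq b_2$, then $f=Q_{S_1}(x_1)\,g+b_1$ is already of the form \eqref{eq3.1}, with a new size-$1$ first layer $M_1=Q_{S_1}(x_1)$ and $B_1=b_1$, $B_2=B_1^{(g)}$, and the four conditions are inherited. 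If $B_1^{(g)}=0$, equivalently $b_1=b_2$, then $g=M_1^{(g)}\cdot(\text{inner})$, so $f=\bigl(Q_{S_1}(x_1)M_1^{(g)}\bigr)\cdot(\text{inner})+b_1$ merges $x_1$ into the first layer of $g$ (raising $k_1$ by one), again preserving the four conditions. Iterating, the layers of $f$ are precisely the maximal runs of equal values in $(b_1,\dots,b_n)$. In the base case $n=1$ one has $f=(b_2-b_1)Q_{S_1}(x_1)+b_1$ with $b_2\neq b_1$; if the resulting $B_2+B_1=b_2$ vanishes I would apply Proposition~\ref{prop2.1} to swap $S_1$ for $S_1^c$ and the two outputs, after which $B_2+B_1=b_1\neq0$.

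\textbf{Uniqueness, and the main obstacle.} Here I would show that the first-layer data is recovered from $f$ and then recurse. For $j\in A_1$, the variable $x_j$ is canalizing for $f$ with canalizing set exactly $S_j$ and canalized output $B_1$ (setting $x_j\in S_j$ forces $M_1=0$; for $x_j\notin S_j$ the conditions $B_2,\dots,B_{r+1}\neq0$ show $f$ is not constant). Conversely, if $j\notin A_1$ then $x_j$ is not canalizing for $f$: on the subcube where every layer-$1$ variable is non-canalizing, $f=B_1+\bigl(M_2(\cdots)+B_2\bigr)$, which is essential in $x_j$ (an easy consequence of the canonical form, again using $B_2,\dots,B_{r+1}\neq0$), whereas off that subcube $f\equiv B_1$, so no value of $x_j$ makes $f$ constant. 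Hence $A_1$, the segments $\{S_j:j\in A_1\}$, and $B_1$ — and thus $M_1$ and $k_1=|A_1|$ — are invariants of $f$; restricting $f-B_1$ to the subcube where all layer-$1$ variables lie outside their segments yields the inner NCF $h$ in the remaining $n-k_1$ variables, to which the induction applies. The genuinely delicate point — and where I expect the main difficulty — is the last layer when $k_r=1$: there the $S_n\leftrightarrow S_n^c$ ambiguity of Proposition~\ref{prop2.1} must be ruled out, which is the purpose of the normalization $B_{r+1}+B_r\neq0$; verifying that this does force uniqueness (and it is here that the hypothesis $n\geq2$ enters, the single-variable case being genuinely non-unique) is the heart of the proof.
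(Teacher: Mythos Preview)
Your approach mirrors the paper's: sufficiency by direct evaluation; existence by identifying the layers with the maximal runs of equal $b_i$ (you peel one variable at a time and merge, the paper does it in one stroke by grouping consecutive equal outputs, but the outcome is identical); uniqueness by showing that the canalizing variables of $f$ are exactly $A_1$, recovering $B_1$ and the segments $S_j$ for $j\in A_1$ as the canalizing data, and then recursing on the inner function. The paper's uniqueness argument is precisely this scheme, ending with an ``in the same way'' for the deeper layers.

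You are right to flag the last layer with $k_r=1$ as the delicate point, but your expectation that the normalization $B_{r+1}+B_r\neq0$ resolves the $S_n\leftrightarrow S_n^c$ ambiguity is mistaken: it does not. For $p\geq 3$, replacing $(S_n,B_r,B_{r+1})$ by $(S_n^c,\,B_r+B_{r+1},\,-B_{r+1})$ yields the same function and still satisfies all four bullets, since the new constants obey $B_r'=B_r+B_{r+1}\neq0$, $B_{r+1}'=-B_{r+1}\neq0$, and $B_r'+B_{r+1}'=B_r\neq0$. So literal uniqueness fails by a factor of two whenever $k_r=1$, and the step you call ``the heart of the proof'' cannot be completed as you envision. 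The actual role of the condition $B_{r+1}+B_r\neq0$ is different: it prevents the last layer from collapsing into layer $r{-}1$ (since $B_{r+1}Q_{S_n}+B_r=B_rQ_{S_n^c}$ when $B_{r+1}+B_r=0$), thereby making $r$ well-defined; in the Boolean case it simply forces $k_r\geq2$, which is why the issue is invisible there. The paper's own proof glosses over this with ``in the same way,'' and its Lemma~\ref{lm3.1}---used for the $k_r=1$ contribution in the enumeration Theorem~\ref{thm3.2}---explicitly halves the number of segment choices precisely to correct for this two-to-one redundancy.
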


\begin{proof}
First, let $b_i = \sum_{j=1}^i B_j$. Then it is straightforward to check that any function written as in Equation \ref{eq3.1} 
is a $\{\sigma':\mathbb{S'}:\beta'\}$ NCF, where
\begin{align*}
\sigma'(x_1,\ldots,x_n)&=(x_{1_1},\ldots,x_{1_{k_1}},\ldots,x_{r_1},\ldots,x_{r_{k_r}}),\\
\mathbb{S'}&=(S_{1_1},\ldots,S_{1_{k_1}},\ldots,S_{r_1},\ldots,S_{r_{k_r}}), \\
\beta'&=(\underbrace{b_1,\ldots,b_1}_{k_1},\underbrace{b_2,\ldots,b_2}_{k_2},\ldots,\underbrace{b_r,\ldots,b_r}_{k_r},b_{r+1}).
\end{align*}

Second, suppose $f$ is a $\{\sigma:\mathbb{S}:\beta\}$ NCF, where  $\mathbb{S}=(S_{1},S_{2},\ldots,S_{n})$ and $\beta=(b_{1},b_{2},\ldots,b_{n+1})$, with $b_n\neq b_{n+1}$. Then there exist $k_i,i=1,\ldots,r$, $k_1+\cdots+k_r=n$, $k_i\geq 1$, such that 
\begin{align*}
b_1=\cdots=b_{k_1}&=:C_1,\\
b_{k_1+1}=\cdots=b_{k_1+k_2}&=:C_2\\
&\ \vdots \\
b_{k_1+\cdots+k_{r-1}+1}=\cdots=b_n&=:C_r,\\
b_{n+1}&=:C_{r+1},\\ 
\text{and}\ C_j &\neq C_{j+1} \ \text{for all}\ j=1,\ldots, r
\end{align*}
Let $B_1:=C_1, B_2:=C_2-C_1, \ldots, B_{r+1}=C_{r+1}-C_r$. Hence, $B_1\in \Ff, B_2,\ldots,B_{r+1} \in \Ff - \{0\}$, and $f(x) = M_{1}(M_{2}(\cdots(M_{r-1}(B_{r+1}M_{r}+B_r )+B_{r-1})\cdots)+B_2)+B_1$, which shows that any NCF can be written as in Equation \ref{eq3.1}.

Finally, we need to show that each NCF has a unique polynomial representation. 
Let $f$ be written as in Equation \ref{eq3.1}. Then all the variables $x_{\sigma(1)},\ldots,x_{\sigma(k_1)}$ of $M_1$ are canalizing variables of $f$ with common canalized output $B_1$. To prove the uniqueness of $M_1$ and $B_1$, we will now show that $f$ has no other canalizing variables.
All variables of $M_2$, $x_{\sigma(k_1+1)},\ldots,x_{\sigma(k_2)}$, are canalizing variables of the subfunction $f_1:=M_{2}(\cdots(M_{r-1}(B_{r+1}M_{r}+B_r )+B_{r-1})\cdots)+(B_2+B_1)$. Since $B_1\neq B_1+B_2$, $x_{\sigma(k_1+1)},\ldots,x_{\sigma(k_2)}$ are not canalizing variables of $f$. In the same manner, all variables of $M_3$ are not canalizing variables of $f_1$ and thus not canalizing variables of $f$ either. 
Iteratively, we can prove that $x_{\sigma(1)},\ldots,x_{\sigma(k_1)}$ are the only canalizing variables of $f$, which proves the uniqueness of $M_1$ and $B_1$. In the same way, the uniqueness of $M_2, \ldots, M_r$ and $B_2, \ldots, B_{r+1}$ follows.
\end{proof}

\begin{cor}
Any choice of $r, k_i, A_i, B_j$ as in Theorem \ref{th3.1} results in an NCF. Thus, this formula can be used to generate NCFs with desired properties.
\end{cor}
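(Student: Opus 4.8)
The plan is to obtain the corollary directly from the ``if'' direction of Theorem \ref{th3.1}. Given any data $r$, $(k_i)$, $(A_i)$, $(S_j)$, $(B_j)$ satisfying the four bulleted conditions, form the function $f$ by substituting them into the right-hand side of Equation \ref{eq3.1}. I then claim $f$ is a $\{\sigma':\mathbb{S'}:\beta'\}$ NCF in the sense of Definition \ref{def2.3}, where $\sigma'$ lists the variables block by block in the order $A_1,A_2,\ldots,A_r$, where $\mathbb{S'}$ lists the matching segments in the same order, and where $\beta'=(\underbrace{b_1,\ldots,b_1}_{k_1},\ldots,\underbrace{b_r,\ldots,b_r}_{k_r},b_{r+1})$ with $b_i=\sum_{j=1}^i B_j$. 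These are exactly the objects already written down in the proof of Theorem \ref{th3.1}.

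To verify the claim I would check the two requirements of Definition \ref{def2.3}. First, $\beta'$ must satisfy $b_n\neq b_{n+1}$: its last two entries are $b_r$ and $b_{r+1}$ (the block of $b_r$'s occupies the final $k_r\geq1$ of the first $n$ positions), and $b_{r+1}-b_r=B_{r+1}\neq 0$ by the third bullet. Second, $f$ must take the prescribed value on each region of the nested case split. The $n+1$ regions fall into $r+1$ groups according to which block $A_i$ first contains a variable sitting in its segment, if any: on the $i$-th group one has $M_1=\cdots=M_{i-1}=1$ and $M_i=0$ (a product of indicators vanishes as soon as one factor does), so Equation \ref{eq3.1} collapses to $f=B_i+B_{i-1}+\cdots+B_1=b_i$, which is exactly the value $\beta'$ assigns to all $k_i$ positions of that group; on the final region every $M_i=1$, giving $f=\sum_{j=1}^{r+1}B_j=b_{r+1}$. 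Thus $(\sigma',\mathbb{S'},\beta')$ witnesses that $f$ is nested canalizing, which is the first sentence of the corollary. (The fourth bullet, $B_{r+1}+B_r\neq 0$ when $k_r=1$, plays no role here; it is only what makes this representation the \emph{canonical} one of Theorem \ref{th3.1}, and so it imposes no restriction on whether $f$ is an NCF.) The second sentence is then just the remark that every ingredient of the parametrization --- the composition $k_1+\cdots+k_r=n$, the ordered partition of $\{1,\ldots,n\}$ into the $A_i$, the segment attached to each variable, and the constants $B_j$ --- may be chosen freely, so one can prescribe, e.g., the number of canalizing layers, which variables canalize first, or the canalized outputs, and read off an NCF meeting the prescription.

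I do not expect a genuine obstacle: all the substance is contained in Theorem \ref{th3.1}. The only point needing care is bookkeeping --- keeping the flattened index order ($A_1$ first, then $A_2$, and so on) consistent among Equation \ref{eq3.1}, the permutation $\sigma'$, and the case split of Definition \ref{def2.3}, and confirming that the constants telescope so that the value on the $i$-th group of regions is indeed $b_i=\sum_{j\le i}B_j$.
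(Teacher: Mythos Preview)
Your proposal is correct and matches the paper's approach: the paper states this corollary without a separate proof, treating it as immediate from the ``if'' direction of Theorem \ref{th3.1}, whose proof already exhibits exactly the $\sigma'$, $\mathbb{S'}$, and $\beta'$ you write down. Your additional verification of the two requirements of Definition \ref{def2.3} and your observation about the fourth bullet are correct elaborations of what the paper leaves implicit.
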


\begin{remark}\label{remark2}
In the Boolean case, each $M_i$ in Theorem \ref{th3.1} is an extended monomial, and, since $B_{r+1}+B_r=1+1=0$, $k_r$ is greater than 1. Thus, Theorem \ref{th3.1} reduces to its Boolean version, already stated as Theorem 4.2 in \cite{Yua1}.
\end{remark}

Because each NCF can be uniquely written in the form of Equation \ref{eq3.1}, 
the number $r$ is uniquely determined by $f$, and can be used to specify the class of NCFs as in the Boolean case \cite{Yua1}. 
We can therefore use the structure of this polynomial form to define an additional structure of NCFs, which might shed light
on the dynamic behavior of NCF-governed networks. 

\begin{defn}\label{def3.3}
For an NCF $f$, written in the form of Equation \ref{eq3.1}, let the number $r$ be called its \emph{layer number}. 
Essential variables of $M_{1}$ are called most dominant variables (canalizing variables), and are part of the first layer of $f$. Essential variables of $M_{2}$ are called second most dominant variables, and are part of the second layer, etc.
\end{defn}

\begin{remark}\label{remark3}
Just like in the Boolean case, Equation~\ref{eq3.1} allows the use of Corollary 4.8 in \cite{Yua1}: The layer number of any NCF can be determined by counting the number of changes in the canalized output values. For example, if $p=3$ and if $f$ is nested canalizing with canalized output values $\beta = (1,0,2,2,0,1)$ ($n=5$), then the layer number of $f$ is $4$.
\end{remark}

We now derive some technical results needed in the construction of a closed formula for the number of NCFs. 

\begin{lemma}\label{lm3.1}
Let $a, b$ be any nonzero elements of $\Ff$, and let $S$ be any segment of $\Ff$. The number of different functions $f=bQ_S(x)+a$, which cannot be written as $cQ_{S'}(x)$, where $c\neq 0$ and $S'$ is a segment of $\Ff$, is
$(p-1)^2(p-2)$.
\end{lemma}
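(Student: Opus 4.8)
The plan is to reduce the statement to a direct counting problem about two-valued functions. The first step is to pin down the shape of $f = bQ_S(x)+a$: since $Q_S$ is $0$ on $S$ and $1$ on $S^c$, the function $f$ equals $a$ on $S$ and $a+b$ on $S^c$, and these two outputs are distinct because $b\neq 0$. Thus $f$ is exactly the data of the ordered partition $(S,S^c)$ of $\Ff$ together with the two output values $a$ (on $S$) and $a+b$ (on $S^c$), and conversely such data recovers the triple $(S,a,b)$.

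The second step is to characterize which of these functions can be written as $cQ_{S'}(x)$ with $c\neq 0$ and $S'$ a segment. Any such function takes the value $0$ on $S'$ (nonempty, being a segment) and the value $c\neq 0$ on $(S')^c$; hence $f=bQ_S(x)+a$ has this form if and only if $0\in\{a,a+b\}$, i.e.\ --- since $a\neq 0$ --- if and only if $a+b=0$. (In that case $f=aQ_{S^c}(x)$, and $S^c$ is a segment.) So the functions to be counted are precisely those coming from triples $(S,a,b)$ with $S$ a segment, $a\neq 0$, $b\neq 0$, and $a+b\neq 0$.

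The third step is the enumeration. There are $2(p-1)$ segments of $\Ff$ --- the $p-1$ initial segments $\{0,\dots,j\}$ for $0\le j\le p-2$ together with their $p-1$ complements, two disjoint families since an initial segment contains $0$ but the complement of one does not. There are $p-1$ choices for the nonzero element $a$, and then $p-2$ choices for $b$ (it must avoid $0$ and $-a$). This gives $2(p-1)^2(p-2)$ admissible triples. It remains to see that each admissible function corresponds to exactly two such triples, after which dividing by $2$ yields $(p-1)^2(p-2)$. Given an admissible $f$, its two level sets form a partition $\{S,S^c\}$ of $\Ff$ into distinct segments; any representation $b'Q_{S'}(x)+a'$ must have $S'$ equal to one of these two level sets, and choosing $S'=S$ forces $(a',b')=(a,b)$ while choosing $S'=S^c$ forces $(a',b')=(a+b,-b)$. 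Both triples are admissible precisely because $a+b\neq 0$ (and $b\neq 0$), so there are exactly two, no more and no fewer.

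The step that will take the most care is this last two-to-one correspondence: one has to argue both that the second representation $(S^c,a+b,-b)$ is genuinely admissible --- this is where the hypothesis $a+b\neq 0$ is used symmetrically --- and that no third representation exists, using that the unordered partition into level sets is an invariant of the function and that $(a,b)$ is determined once it is decided which block plays the role of $S$. Everything else is bookkeeping.
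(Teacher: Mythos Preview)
Your proof is correct and follows essentially the same approach as the paper: identify the two output values $a$ and $a+b$ of $f$, observe that $f$ has the form $cQ_{S'}(x)$ exactly when $a+b=0$, count the $2(p-1)\cdot(p-1)\cdot(p-2)$ admissible triples, and then halve using the identity $bQ_S(x)+a=-bQ_{S^c}(x)+(a+b)$. Your treatment of the two-to-one correspondence is in fact somewhat more careful than the paper's, since you explicitly verify that the alternate triple $(S^c,a+b,-b)$ remains admissible precisely because $a+b\neq 0$.
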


\begin{proof}
See Appendix.
\end{proof}

\begin{lemma}\label{lm3.2}
Given $a,b \neq 0$ and segments $S_i$, $i=1,\ldots,k$ with $k\geq 2$, then
\begin{enumerate}
\item $f(\xx)=f(x_1,\ldots,x_k)=b\prod_{j=1}^k Q_{S_j}(x_j)+a$ cannot be written as $c\prod_{j=1}^k Q_{S_j'}(x_j)$, where $c\neq 0$ and all $S_j'$ are segments, $j=1,\ldots, k$.
\item There are $2^{k}(p-1)^{k+2}$ different functions of the form $b\prod_{j=1}^k Q_{S_j}(x_j)+a$.
\end{enumerate}
\end{lemma}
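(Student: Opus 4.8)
The plan is to prove the two parts of Lemma~\ref{lm3.2} in turn, with part~(1) serving as the key structural input and part~(2) as a counting consequence that also uses Lemma~\ref{lm3.1}.

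For part~(1), I would argue by contradiction. Suppose $b\prod_{j=1}^k Q_{S_j}(x_j)+a = c\prod_{j=1}^k Q_{S_j'}(x_j)$ as functions on $\Ff^k$. The left-hand side takes exactly two values, $a$ (when some $x_j\in S_j$) and $a+b$ (when $x_j\in S_j^c$ for all $j$); similarly the right-hand side takes values $0$ and $c$. Since $a\neq 0$ and $a+b$ can equal $0$ for at most one choice, I would match value sets $\{a,a+b\}=\{0,c\}$. The point where the left side equals $a+b$ is the set $\prod_j S_j^c$, which is a single ``box''; the point where the right side equals $c$ is $\prod_j (S_j')^c$. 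For these two functions to agree, the preimage of the ``non-$a$ on both sides'' value must coincide, forcing $\prod_j S_j^c = \prod_j (S_j')^c$ (or its complement-matched version), hence $S_j = S_j'$ for all $j$ (up to the obvious reindexing of which value is which). But then comparing the actual output values gives $a = 0$ on the large box's complement, contradicting $a\neq 0$. The crucial use of $k\geq 2$ is that with $k\geq 2$ the set $\bigcup_j \{x : x_j\in S_j\}$ is not itself a product box, so the two-value partition of $\Ff^k$ induced by $b\prod Q_{S_j}+a$ is genuinely different in shape from the one induced by any single product $c\prod Q_{S_j'}$; I expect nailing this ``different partition shape'' step cleanly to be the main obstacle, since one must rule out all reindexings and the complement ambiguity from Proposition~\ref{prop2.1}/Lemma~\ref{lm3.1}.

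For part~(2), I would count the functions of the form $b\prod_{j=1}^k Q_{S_j}(x_j)+a$ by counting parameter tuples and then dividing by the size of each fiber of the parametrization map. The raw parameter count is: $(p-1)$ choices for $b$, $(p-1)$ choices for $a$, and for each of the $k$ variables, the number of segments $S_j$ of $\Ff$. A segment is $\{0,\ldots,j\}$ or its complement for $0\leq j < p-1$, giving $2(p-1)$ segments; so the raw count is $(p-1)^2\big(2(p-1)\big)^k = 2^k(p-1)^{k+2}$. It then remains to show the parametrization is injective on these tuples, i.e. that $b\prod Q_{S_j}+a$ determines $b$, $a$, and each $S_j$ uniquely. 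This follows because, by part~(1), such a function is genuinely a ``$k$-layer-tail'' object and cannot collapse; concretely, $a$ is the value taken on the ``large'' region and $a+b$ on the box $\prod (S_j^c)$, which pins down $a$ and $b$, and then each $S_j$ is recovered as the set of values of $x_j$ for which flipping $x_j$ into that coordinate's ``in-$S_j$'' part forces output $a$.

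One subtlety I would flag: the complement symmetry $Q_S = 1 - Q_{S^c}$ does relate different parameter tuples, e.g. $bQ_S(x) = -b + bQ_{S^c}(x)\cdot(-1)$-type identities, but for a product of $k\geq 2$ indicator factors, replacing a single $S_j$ by $S_j^c$ does \emph{not} give a function of the same algebraic shape $b'\prod Q_{S_j'}+a'$ (it expands into lower-degree cross terms), so no such collapse occurs within the family; this is exactly where part~(1) is invoked, and it is why the count is clean rather than needing a symmetry-orbit correction. Once injectivity is established, part~(2) is immediate, so the whole argument reduces to the partition-shape analysis in part~(1).
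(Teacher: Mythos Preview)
Your overall strategy matches the paper's: compare the two value sets, look at the preimages as a product box versus its complement, and use that for $k\geq 2$ a product box cannot equal the complement of a product box. However, your chain of reasoning in part~(1) is misdirected.

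Once you match $\{a,a+b\}=\{0,c\}$ and use $a\neq 0$, you are \emph{forced} into $a+b=0$ and $a=c$; there is no remaining case split. The preimage of $0$ on the left is then the box $\prod_j S_j^c$, while the preimage of $0$ on the right is the \emph{complement} of the box $\prod_j (S_j')^c$. So equality of the two functions forces
\[
\prod_j S_j^c \;=\; \Ff^k \setminus \prod_j (S_j')^c,
\]
and the contradiction is purely geometric: for $k\geq 2$ the complement of a nonempty proper product box is never itself a product box. This is precisely the fact you mention (``$\bigcup_j\{x:x_j\in S_j\}$ is not a product box''), but you present it as a side remark rather than as the actual contradiction. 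Your written chain instead asserts $\prod_j S_j^c=\prod_j (S_j')^c$, deduces $S_j=S_j'$, and from that derives $a=0$; but that first equality is \emph{not} what equality of the functions gives you, so the argument as written does not close. Note also that the unlabeled two-part partitions induced by the two sides have the \emph{same} shape (box versus its complement); what differs is which value sits on which piece, and it is $a\neq 0$ that forces the incompatible matching.

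Your part~(2) is essentially correct and coincides with the paper's count. One small correction: the injectivity does not literally invoke part~(1) (which concerns representations without an additive constant); rather, it uses the same geometric fact---a product box and its complement are distinguishable for $k\geq 2$---to rule out swapping the roles of $a$ and $a+b$, after which the box $\prod_j S_j^c$ recovers each $S_j$ and the two output values recover $a$ and $b$.
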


\begin{proof}
See Appendix.
\end{proof}

Let $\mathbb{NCF}(n)$ denote the set of all NCFs in $n$ variables.

\begin{theorem}\label{thm3.2}
For $n\geq2$, the number of NCFs is given by
\begin{equation}\label{eq3.2}
|\mathbb{NCF}(n)|={2^np(p-1)^{n}\sum_{r=1}^{n}(p-1)^{r}r!\left[S(n,r)-\frac{np}{2}S(n-1,r)\right]}
\end{equation}
in terms of the Stirling numbers of the second kind
\[S(n,r)=\frac{1}{r!}\sum_{t=0}^{r}(-1)^t\binom{r}{t}(r-t)^{n}=\frac{1}{r!}\sum_{\substack{k_{1}+\cdots+k_{r}=n\\k_{i}\geq1,i=1,\ldots,r}}\frac{n!}{k_{1}!k_{2}!\cdots k_{r}!} \]
\end{theorem}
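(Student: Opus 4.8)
The plan is to count NCFs by grouping them according to their layer number $r$ and the unordered partition $n = k_1 + \cdots + k_r$ of the variables into layers. By Theorem \ref{th3.1}, every NCF is uniquely an expression $M_1(M_2(\cdots(M_{r-1}(B_{r+1}M_r + B_r) + B_{r-1})\cdots) + B_2) + B_1$ subject to the four bulleted constraints, so counting NCFs amounts to counting choices of: (i) $r$; (ii) the ordered set partition of $\{1,\ldots,n\}$ into blocks $A_1,\ldots,A_r$ with $|A_i| = k_i \geq 1$; (iii) for each variable $j$, a segment $S_j$ of $\Ff$; and (iv) the scalars $B_1 \in \Ff$, $B_2,\ldots,B_{r+1} \in \Ff - \{0\}$, with the extra restriction that if $k_r = 1$ then $B_{r+1} + B_r \neq 0$. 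The number of ordered set partitions with prescribed block sizes summed over all size vectors with $r$ parts is exactly $r!\,S(n,r)$, which is where the Stirling numbers enter.

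First I would handle the ``generic'' count ignoring the subtlety in constraint four: there are $p$ choices for $B_1$, $(p-1)$ choices for each of $B_2,\ldots,B_{r+1}$ (that is $(p-1)^r$), two choices of segment type contributing a factor $2$ per variable (so $2^n$ overall, matching Lemma \ref{lm3.2}(2) which records $2^k(p-1)^{k+2}$ functions of the form $b\prod Q_{S_j}(x_j)+a$), and $r!\,S(n,r)$ ways to assign variables to layers. This suggests a leading term proportional to $2^n p (p-1)^n \sum_r (p-1)^r r!\, S(n,r)$. The work is then to justify that no overcounting occurs — that distinct parameter tuples give distinct functions — which is precisely the uniqueness half of Theorem \ref{th3.1} together with Lemma \ref{lm3.2}(1), guaranteeing that a layer with $k_i \geq 2$ variables genuinely ``uses up'' all $k_i$ of them and cannot collapse; and that a layer with $k_i = 1$ does not secretly absorb into the previous layer, which is the content of Lemma \ref{lm3.1} and the $B_{r+1}+B_r \neq 0$ condition for the last layer.

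The main obstacle is accounting for the correction term $-\tfrac{np}{2} S(n-1,r)$. This arises from the constraint that when the final layer is a singleton ($k_r = 1$), the pair $(B_r, B_{r+1})$ must satisfy $B_{r+1} + B_r \neq 0$; without this, $B_{r+1}M_r + B_r$ would be a constant-plus-nothing degenerate case (equivalently, a representation with a spurious extra layer). So I would split the sum into configurations with $k_r \geq 2$ and configurations with $k_r = 1$. For the $k_r = 1$ case, fixing which single variable sits alone in the last layer ($n$ choices) reduces the remaining data to an ordered set partition of the other $n-1$ variables into $r-1$ blocks (contributing $(r-1)!\,S(n-1,r-1)$, re-indexed to align with $S(n-1,r)$ after shifting), and among the $(p-1)^2$ pairs $(B_r, B_{r+1})$ of nonzero scalars exactly $p-1$ are forbidden (those with $B_{r+1} = -B_r$), i.e. a fraction $\tfrac{1}{p-1}$. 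Carefully tracking how the factors of $p$, $(p-1)$, $2$, and the binomial/Stirling recurrences combine — in particular using $r\,S(n,r) + S(n,r-1)$-type identities or direct manipulation of $r!\,S(n,r)$ versus $(r-1)!\,S(n-1,r-1)$ — should collapse the two pieces into the single closed form in Equation \ref{eq3.2}; verifying the exact coefficient $\tfrac{np}{2}$ (and that the $2^n$ and $p$ prefactors survive unchanged) is the computation I would do most carefully. Finally I would sanity-check against $p = 2$, where $S$-segments, the known Boolean NCF count, and Remark \ref{remark2} (forcing $k_r \geq 2$) must all be consistent.
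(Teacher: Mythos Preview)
Your overall strategy---splitting by layer number $r$ and then separating the cases $k_r \geq 2$ and $k_r = 1$---is exactly what the paper does. The genuine gap is in your accounting for the $k_r = 1$ case. You assume that distinct parameter tuples yield distinct NCFs, citing the uniqueness half of Theorem~\ref{th3.1}, and then only subtract the tuples violating $B_{r+1}+B_r \neq 0$. But when $k_r = 1$ the parametrization is \emph{not} injective even among valid tuples: the identity
\[
bQ_S(x)+a \;=\; -bQ_{S^c}(x)+(a+b)
\]
(which is the key observation inside the proof of Lemma~\ref{lm3.1}, and essentially Proposition~\ref{prop2.1}) shows that the triples $(B_r, B_{r+1}, S_n)$ and $(B_r+B_{r+1}, -B_{r+1}, S_n^c)$ define the same function, and both satisfy all four bulleted constraints of Theorem~\ref{th3.1}. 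Thus valid $k_r=1$ tuples double-count NCFs, and ``generic minus forbidden'' overcounts.

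Concretely, your mechanism (removing $p-1$ of the $(p-1)^2$ pairs $(B_r,B_{r+1})$) produces a correction coefficient of $n$ in front of $S(n-1,r)$, not $\tfrac{np}{2}$; for $p=3$, $n=2$ your formula gives $288$ instead of the correct $192$. The paper avoids this by invoking Lemma~\ref{lm3.1} as a count of \emph{distinct functions}: it reports $(p-1)^2(p-2)$ functions $bQ_S(x)+a$ with $a,b\neq 0$ and $a+b\neq 0$, which is already half of the $2(p-1)\cdot(p-1)(p-2)$ valid parameter triples. Combining this halved $k_r=1$ count with the (genuinely injective, by Lemma~\ref{lm3.2}) $k_r\geq 2$ count and reindexing is what makes the coefficient come out to $\tfrac{p-2}{2}-(p-1)=-\tfrac{p}{2}$ times $n\,r!\,S(n-1,r)$. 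Your $p=2$ sanity check would not detect the error, since for $p=2$ the constraint $B_{r+1}+B_r\neq 0$ already rules out $k_r=1$ entirely and there is nothing left to double-count.
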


\begin{proof}
If $r=1$, then $f=B_2M_1+B_1$. Similar to Lemma \ref{lm3.2}, the number of such functions is $(2(p-1))^n(p-1)p=2^n(p-1)^{n+1}p$, 
since $B_1 \in \Ff$ can be arbitrarily chosen, and $k_1=n$.\\

For $r>1$, Equation \ref{eq3.1} yields that for each choice of $k_{1},\ldots, k_{r}$, $k_{i}\geq1$, $i=1,\ldots,r$, there are $(2(p-1))^{k_{j}}\binom{n-k_{1}-\cdots-k_{j-1}}{k_{j}}$ ways to form $M_{j}$, $j=1,\ldots,r$. For those NCFs with $k_r=1$, by Lemma \ref{lm3.1}, there are $(p-1)^2(p-2)$ different functions of the form $B_{r+1}M_r+B_r$ with $B_r, B_{r+1}\neq 0$. For the remaining NCFs, i.e., those with $k_r>1$, Lemma \ref{lm3.2} yields that there are $(p-1)^2(2(p-1))^{k_r}$ ways to form $B_{r+1}M_r+B_r$, with $B_r, B_{r+1}\neq 0$.

Note that there are $p-1$ choices for each $B_i$, $2\leq r\leq B_{r-1}$, $p$ choices for $B_1$, and $2(p-1)$ choices for each canalizing input segment. Hence, the total number of NCFs with $r>1, k_r=1$, can be given by
{\small
\begin{align*}
N_1&=\sum_{r=2}^n\sum_{\substack{k_{1}+\cdots+k_{r-1}=n-1\\k_{i}\geq1,i=1,\ldots,r-1}}(2(p-1))^{k_{1}+\cdots+k_{r-1}}\binom{n}{k_{1}}\binom{n-k_{1}}{k_{2}}\cdots\binom{n-k_{1}-\cdots-k_{r-2}}{k_{r-1}}(p-1)^2(p-2)(p-1)^{r-2}p\\
&=2^{n-1}p(p-2)\sum_{r=2}^n\sum_{\substack{k_{1}+\cdots+k_{r-1}=n-1\\k_{i} \geq1,i=1,\ldots,r-1}}(p-1)^{n+r-1}\frac{n!}{(k_{1})!(n-k_{1})!}\frac{(n-k_{1})!}{(k_{2})!(n-k_{1}-k_{2})!}\frac{(n-k_{1}-\cdots-k_{r-2})!}{k_{r-1}!(n-k_{1}-\cdots-k_{r-1})!}\\
&=2^{n-1}p(p-2)\sum_{r=2}^n(p-1)^{n+r-1}\sum_{\substack{k_{1}+\cdots+k_{r-1}=n-1\\ k_{i}\geq1,i=1,\ldots,r-1}}\frac{n!}{k_{1}!k_{2}!\cdots k_{r-1}!} \\
&=2^np(p-1)^n\sum_{r=1}^{n-1}(p-1)^{r}\left(\frac{p-2}{2}\right)nr!S(n-1,r)
\end{align*}}
where the last step follows by shifting the index of the sum.
Similarly, the total number of NCFs with $r>1, k_r>1$ is

{\small \begin{align*}
N_2&=\sum_{r=2}^{n-1}\sum_{\substack{k_{1}+\cdots+k_{r}=n\\k_{i}\geq1,i=1,\ldots,r-1,k_r\geq 2}}(2(p-1))^{k_{1}+\cdots+k_{r}}\binom{n}{k_{1}}\binom{n-k_{1}}{k_{2}}\cdots\binom{n-k_{1}-\cdots-k_{r-1}}{k_{r}}(p-1)^2(p-1)^{r-2}p\\
&=2^{n}p\sum_{r=2}^{n-1}\sum_{\substack{k_{1}+\cdots+k_{r}=n\\k_{i}\geq1,i=1,\ldots,r-1,k_r\geq 2}}(p-1)^{n+r}\frac{n!}{(k_{1})!(n-k_{1})!}\frac{(n-k_{1})!}{(k_{2})!(n-k_{1}-k_{2})!}\cdots \frac{(n-k_{1}-\cdots-k_{r-1})!}{k_{r}!(n-k_{1}-\cdots-k_{r})!}\\
&=2^{n}p\sum_{r=2}^{n-1}(p-1)^{n+r}\sum_{\substack{k_{1}+\cdots+k_{r}=n\\k_{i}\geq1,i=1,\ldots,r-1,k_r\geq 2}}\frac{n!}{k_{1}!k_{2}!\cdots k_{r}!} \\
&=2^np(p-1)^n\sum_{r=2}^{n-1}(p-1)^{r}\left[r!S(n,r)-n(r-1)!S(n-1,r-1)\right]
\end{align*}}
We can extend the upper limit to $n$ and shift the index of the second sum.
\[N_2= 2^np(p-1)^n\left[\sum_{r=2}^{n}(p-1)^{r}r!S(n,r) - \sum_{r=1}^{n-1}(p-1)^{r+1}nr!S(n-1,r)\right]\]
The $r=1$ term was previously calculated and neatly corresponds to a $r=1$ term in the first sum, so, by
combining all three groups of NCFs, the total number of NCFs in $n$ variables is
\begin{align*}
|\mathbb{NCF}(n)|&=2^n(p-1)^{n+1}p+N_1+N_2\\
&={2^np(p-1)^n\sum_{r=1}^{n}(p-1)^{r} r! \left[S(n,r)-\frac{np}{2}S(n-1,r)\right].}
\end{align*}
\end{proof}

Note that for $p=2$, we get the same formula as in \cite{Yua1}. However, we are now also able to explicitly compute the number of multistate NCFs. For example, when $p=3$ and $n=2,3,4$, we get 192, 5568, 219468, respectively; when $p=5$ and $n=2,3,4$, we get 5120, 547840, 78561280, respectively. These results are consistent with those calculated recursively in \cite{Mur2}.

By expressing Equation \ref{eq3.2} recursively, we get

\begin{cor}
\label{co3.1} For the nonlinear recursive sequence
\[a_{2}=4(p-1)^4, a_{n}=\sum_{r=2}^{n-1}\binom{n}{r-1}2^{r-1}(p-1)^ra_{n-r+1}+2^{n-1}(p-1)^{n+1}(2+n(p-2)) , n\geq3\]
it holds that 
\[\big|\mathbb{NCF}(n)\big| = pa_n,\]
and the explicit solution for $a_n$ is given by
\begin{align*}
a_n&={2^n(p-1)^{n}\sum_{r=1}^{n}(p-1)^{r}r!\left[S(n,r)-\frac{np}{2}S(n-1,r)\right].}
\end{align*}
\end{cor}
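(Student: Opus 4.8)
The explicit formula for $a_n$ requires no work beyond Theorem~\ref{thm3.2}: that theorem gives $|\mathbb{NCF}(n)| = p\cdot\big(2^n(p-1)^n\sum_{r=1}^n(p-1)^r r![S(n,r)-\tfrac{np}{2}S(n-1,r)]\big)$, so $|\mathbb{NCF}(n)|/p$ is exactly the claimed closed form. Everything thus reduces to checking that the sequence $b_n:=|\mathbb{NCF}(n)|/p$ obeys the stated recursion; since that recursion together with one initial value determines a sequence uniquely, this yields simultaneously $|\mathbb{NCF}(n)|=p\,a_n$ and the formula for $a_n$. I plan to establish the recursion combinatorially, by peeling off the first (most dominant) layer of the canonical form~\eqref{eq3.1}; an algebraic alternative is sketched at the end.

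\emph{First-layer decomposition.} Sort $\mathbb{NCF}(n)$ by $j:=k_1$, the number of variables of the first factor $M_1$. If $1\le j\le n-1$ then the layer number is $r\ge 2$, and reading $f=M_1\big(M_2(\cdots(B_{r+1}M_r+B_r)\cdots)+B_2\big)+B_1$ we extract the NCF $g:=M_2(\cdots(B_{r+1}M_r+B_r)\cdots)+B_2$ in the $n-j$ variables outside $M_1$, whose leading constant $B_2$ is nonzero. Conversely, a choice of a $j$-subset $A_1$ ($\binom{n}{j}$ of them), a segment $S_\ell$ for each $\ell\in A_1$ ($(2(p-1))^j$ choices), a constant $B_1\in\Ff$ ($p$ choices), and an NCF $g$ in the remaining variables with nonzero leading constant, yields $f:=M_1 g+B_1$ with $M_1=\prod_{\ell\in A_1}Q_{S_\ell}(x_\ell)$; this expression is of the form~\eqref{eq3.1} and satisfies all the conditions of Theorem~\ref{th3.1} (the only one needing comment is the last, which is inherited from the corresponding condition on $g$), so by the uniqueness part of Theorem~\ref{th3.1} the first layer of $f$ is precisely $M_1$. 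Writing $N_m$ for the number of NCFs in $m$ variables with nonzero leading constant, the number of $f\in\mathbb{NCF}(n)$ with $k_1=j$ is therefore $p\binom{n}{j}(2(p-1))^j N_{n-j}$.

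\emph{Counting $N_m$, and the two degenerate cases.} For $m\ge 2$ the map $f\mapsto f+c$ permutes $\mathbb{NCF}(m)$ for every $c\in\Ff$, since it changes only the leading constant, which for $m\ge 2$ occurs in no condition of Theorem~\ref{th3.1} (it could appear in ``$B_{r+1}+B_r\neq 0$ when $k_r=1$'' only for $r=1$, where $k_r=k_1=m\ge 2$ makes the clause vacuous); hence $\mathbb{NCF}(m)$ splits into $\Ff$-orbits of size $p$, of which $p-1$ members have nonzero leading constant, and $N_m=(p-1)b_m$. This breaks down for $m=1$, so the terms $j=n$ and $j=n-1$ are counted by hand. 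For $j=n$ (hence $r=1$, $f=B_2M_1+B_1$, $k_1=n\ge 2$) a direct count of $(B_1,B_2,S_1,\dots,S_n)$ gives $p\cdot 2^n(p-1)^{n+1}$ functions, all distinct because flipping one segment destroys the single-monomial shape when $n\ge 2$ (using Lemma~\ref{lm3.2}). For $j=n-1$ (hence $r=2$, $k_2=1$, $f=M_1(B_3Q_S(x)+B_2)+B_1$) one has $B_2,B_3\neq 0$ and the last condition of Theorem~\ref{th3.1} forces $B_2+B_3\neq 0$; by Lemma~\ref{lm3.1} there are exactly $(p-1)^2(p-2)$ admissible factors $B_3Q_S(x)+B_2$ (it is essential to quote the lemma here rather than multiply out pairs $(B_2,B_3)$, since a one-variable last layer admits the substitution $S\leftrightarrow S^c$), giving $n\cdot(2(p-1))^{n-1}\cdot p\cdot(p-1)^2(p-2)=n\,2^{n-1}p(p-1)^{n+1}(p-2)$ functions.

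\emph{Assembly, and the expected main difficulty.} Dividing the three contributions by $p$ and summing over $j$ yields
\[
b_n=\sum_{j=1}^{n-2}\binom{n}{j}2^j(p-1)^{j+1}b_{n-j}+n\,2^{n-1}(p-1)^{n+1}(p-2)+2^n(p-1)^{n+1};
\]
combining the last two terms into $2^{n-1}(p-1)^{n+1}\big(2+n(p-2)\big)$ and substituting $r=j+1$ in the sum turns this into the recursion of the statement, whose $n=2$ instance (empty sum) reads $2(p-1)^3\big(2+2(p-2)\big)=4(p-1)^4$, matching the stated initial value; hence $b_n=a_n$ for all $n\ge 2$. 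The delicate point throughout is the bookkeeping at the innermost layer: whenever a residual would ``collapse'' (leading constant $0$, or $B_{r+1}+B_r=0$ with $k_r=1$), the resulting function secretly has a shorter first layer or fewer layers and has already been counted under a different value of $j$; avoiding such double counting is precisely the role of the clause ``$B_{r+1}+B_r\neq 0$ when $k_r=1$'' and of Lemmas~\ref{lm3.1}--\ref{lm3.2}, which must be invoked instead of multiplying raw parameter counts (that would overcount single-variable last layers by a factor of two). Alternatively one may skip the combinatorics and verify directly that the closed form of Equation~\eqref{eq3.2} divided by $p$ satisfies the recursion, using $r!\,S(n,r)=\sum_{k\ge 1}\binom{n}{k}(r-1)!\,S(n-k,r-1)$ together with $S(n,r)=rS(n-1,r)+S(n-1,r-1)$; the fiddly part there is steering the correction term $\tfrac{np}{2}S(n-1,r)$ through the reindexing so that it produces exactly $2^{n-1}(p-1)^{n+1}(2+n(p-2))$.
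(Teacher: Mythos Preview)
Your argument is correct. The paper itself offers no proof of this corollary beyond the one-line remark ``By expressing Equation~\ref{eq3.2} recursively, we get\ldots'', so there is nothing to compare at the level of detail you provide; you have in effect supplied the missing justification.

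The first-layer peeling you use is the natural combinatorial reading of the recursion and dovetails with the layer-by-layer count in the proof of Theorem~\ref{thm3.2}: there the authors sum over the full profile $(k_1,\dots,k_r)$, whereas you condition only on $k_1=j$ and recognise the remaining factor as $(p-1)\,b_{n-j}$ via the free action $f\mapsto f+c$ on $\mathbb{NCF}(m)$ for $m\ge 2$. Your separate treatment of $j=n$ and $j=n-1$ is exactly right, and your emphasis that Lemma~\ref{lm3.1} must be invoked for the one-variable tail (rather than multiplying raw parameter counts) pinpoints the only genuine subtlety---the identity $B_{r+1}Q_S(x)+B_r=-B_{r+1}Q_{S^c}(x)+(B_r+B_{r+1})$ that halves the na\"ive count and is also the reason the clause ``$B_{r+1}+B_r\neq 0$ when $k_r=1$'' appears in Theorem~\ref{th3.1}. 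The reindexing $r=j+1$ and the check of the initial value $a_2=4(p-1)^4$ are both fine. The algebraic alternative you sketch (pushing the closed form through the Stirling recurrences) would also work but is, as you note, messier; your combinatorial route is cleaner and closer in spirit to how the paper builds up Theorem~\ref{thm3.2}.
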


{

We are now in a position to derive an asymptotic formula for the number of NCFs, from a generating function. 

\begin{cor}
The exponential generating function of the number of multistate NCFs is
\[G_p(s) = \sum_{n=2}^{\infty}\frac{|\mathbb{NCF}(n)|}{n!} s^n  = \frac{p-p^2(p-1)s}{p-(p-1)\rme^{2(p-1)s}} -p -p(p-1)(p-2)s
\] 
\end{cor}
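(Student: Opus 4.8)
The plan is to feed the closed formula of Theorem~\ref{thm3.2} into the generating function and evaluate the resulting double sum using the classical exponential generating function of the Stirling numbers. Write
\[
|\mathbb{NCF}(n)| = 2^n p (p-1)^n \sum_{r=1}^{n}(p-1)^r r!\Big[S(n,r)-\tfrac{np}{2}S(n-1,r)\Big],
\]
so that, upon dividing by $n!$ and summing $s^n$ over $n\ge 2$, we obtain $G_p(s)=A(s)-B(s)$, with $A$ the part carrying $S(n,r)$ and $B$ the part carrying $\tfrac{np}{2}S(n-1,r)$. Throughout I would treat $G_p$ as a formal power series in $s$ (the interchanges of summation below are then unproblematic; analytic convergence near $s=0$ follows a posteriori from the closed form).

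For $A(s)$ I would swap the $n$- and $r$-sums and apply $\sum_{n\ge r}S(n,r)\,t^n/n! = (\rme^t-1)^r/r!$ with $t=2(p-1)s$. The one subtlety is that the outer sum starts at $n=2$ rather than at $n=r$; the only nonzero term this omits is the $(n,r)=(1,1)$ term, which enters $A$ with weight $p(p-1)\cdot 1!$ times $S(1,1)\cdot 2(p-1)s$, i.e.\ $2p(p-1)^2 s$, and so must be subtracted. After the $r!$ cancels, the sum over $r\ge 1$ is the geometric series $\sum_{r\ge1}\big[(p-1)(\rme^{2(p-1)s}-1)\big]^r$, giving $A(s)=p\,\dfrac{(p-1)(\rme^{2(p-1)s}-1)}{1-(p-1)(\rme^{2(p-1)s}-1)}-2p(p-1)^2 s$. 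For $B(s)$, the identity $n/n! = 1/(n-1)!$ lets me reindex by $m=n-1$, factor out $p^2(p-1)s$, and apply the same Stirling identity --- now with no boundary correction, since for $m\ge1$ the term $m=r=1$ is already included --- which yields $B(s)=p^2(p-1)s\,\dfrac{(p-1)(\rme^{2(p-1)s}-1)}{1-(p-1)(\rme^{2(p-1)s}-1)}$.

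It then remains to simplify $A(s)-B(s)=\big[p-p^2(p-1)s\big]\,\dfrac{(p-1)(\rme^{2(p-1)s}-1)}{1-(p-1)(\rme^{2(p-1)s}-1)}-2p(p-1)^2 s$. Writing $u=\rme^{2(p-1)s}$, one checks $1-(p-1)(u-1)=p-(p-1)u$ and $(p-1)(u-1)=1-\big(p-(p-1)u\big)$, hence $\dfrac{(p-1)(u-1)}{1-(p-1)(u-1)}=\dfrac{1}{p-(p-1)u}-1$. Substituting this and collecting the linear-in-$s$ terms via $p^2(p-1)s-2p(p-1)^2 s=-p(p-1)(p-2)s$ produces exactly $G_p(s)=\dfrac{p-p^2(p-1)s}{p-(p-1)\rme^{2(p-1)s}}-p-p(p-1)(p-2)s$.

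I expect the main obstacle to be purely bookkeeping: correctly accounting for the single boundary term lost because the defining sum for $G_p$ starts at $n=2$ while the Stirling generating function naturally starts at $n=r$, and keeping the index shift in $B(s)$ straight. The low-order coefficients can be cross-checked against the explicit values $|\mathbb{NCF}(2)|$ and $|\mathbb{NCF}(3)|$ recorded after Theorem~\ref{thm3.2}; the concluding algebraic simplification, though slightly fiddly, is routine.
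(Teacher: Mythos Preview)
Your proposal is correct and follows essentially the same route as the paper: both feed Theorem~\ref{thm3.2} into the Stirling identity $\sum_{n\ge k}S(n,k)t^n/n!=(\rme^t-1)^k/k!$, sum the resulting geometric series in $r$, and then reconcile the low-order terms. The only cosmetic difference is that the paper first computes the full EGF for $n\ge 0$ (packaging the inner sum as the ordered-Bell-type sequence $A_n(z)=\sum_k k!S(n,k)z^k$ with EGF $1/(1+z-z\rme^s)$) and subtracts the $n=0,1$ contributions at the end, whereas you start the outer sum at $n=2$ and track the single missing $(n,r)=(1,1)$ boundary term explicitly; the algebra is otherwise identical.
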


\begin{proof}
To obtain results for sums of the type
\[A_n(z)=\sum_{k=0}^{n} k!S(n,k) z^k \]
we use the exponential generating function of $S(n,k)$ with fixed $k$
\[\sum_{n=0}^{\infty} \frac{S(n,k)}{n!} s^n = \frac{\left(\rme^{s}-1\right)^k}{k!}\]
so that the exponential generating function of $A_n(z)$ is
\begin{equation} \label{genbellegf}
\sum_{n=0}^{\infty} \frac{A_n(z)}{n!} s^n = \sum_{k=0}^{\infty} \left(\rme^{s}-1\right)^k z^k = \frac{1}{1+z-z\rme^{s}}
\end{equation}
The $z=1$ case corresponds to the ordered Bell numbers.   Comparing to Equation~\ref{eq3.2}
\[|\mathbb{NCF}(n)| = 2^np(p-1)^n\left[A_n(p-1)-\frac{np}{2}A_{n-1}(p-1)\right] \]
directly gives the generating function for NCFs of the form
\[ \frac{p-p^2(p-1)s}{p-(p-1)\rme^{2(p-1)s}}\]
and we remove the unwanted $n=0$ and $n=1$ terms.
\end{proof}

From the generating function, one can obtain the number of NCFs by Taylor expansion or by performing a contour integral
\[|\mathbb{NCF}(n)| = \frac{n!}{2\pi \rmi}\oint \frac{G_p(s)}{s^{n+1}}\rmd s \]
We can evaluate the integral analogously to the example of the ordered Bell numbers treated in \cite{wilf94}.  Essentially, we start to deform the contour around the real simple pole so that the residue there provides the leading order asymptotic approximation for the number of multistate NCFs for large $n$.  Successive subleading corrections then arise from the complex poles.

\begin{cor}
The number of NCFs is approximately given by
\begin{equation} \label{ncf_approx}
|\mathbb{NCF}(n)| \approx \left[1-\frac{p}{2}\ln\left(\frac{p}{p-1}\right)\right]  2^n(p-1)^n n! \left[\ln\left(\frac{p}{p-1}\right) \right]^{-(n+1)}. \end{equation}
\end{cor}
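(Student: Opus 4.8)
The plan is to apply classical singularity analysis to the exponential generating function $G_p(s)$, following the treatment of the ordered Bell numbers in \cite{wilf94}, and to read off the leading term of $|\mathbb{NCF}(n)| = n!\,[s^n]G_p(s)$ from the dominant pole. First I would discard the polynomial summand $-p - p(p-1)(p-2)s$, which only affects the coefficients of $s^0$ and $s^1$ and is therefore irrelevant for $n\geq 2$; what remains is the meromorphic function $R_p(s) = \frac{p - p^2(p-1)s}{p - (p-1)\rme^{2(p-1)s}}$, with $[s^n]G_p(s) = [s^n]R_p(s)$ for $n\geq 2$.

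Next I would locate the poles of $R_p$. The denominator $D(s) = p - (p-1)\rme^{2(p-1)s}$ vanishes exactly when $\rme^{2(p-1)s} = \tfrac{p}{p-1}$, i.e. at $s_k = \tfrac{1}{2(p-1)}\big(\ln\tfrac{p}{p-1} + 2\pi\rmi k\big)$ for $k\in\mathbb{Z}$. Since $D'(s_k) = -2(p-1)^2\rme^{2(p-1)s_k} = -2p(p-1)\neq 0$, each $s_k$ is a simple pole, and since $|s_k|^2 = \tfrac{1}{4(p-1)^2}\big((\ln\tfrac{p}{p-1})^2 + 4\pi^2 k^2\big)$ is strictly increasing in $|k|$, the real pole $s_0 = \tfrac{1}{2(p-1)}\ln\tfrac{p}{p-1}$ is the one of strictly smallest modulus. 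I would then write the Cauchy coefficient formula $[s^n]G_p(s) = \tfrac{1}{2\pi\rmi}\oint_{|s|=\rho}\tfrac{G_p(s)}{s^{n+1}}\rmd s$ for small $\rho$ and deform the contour outward past $s_0$ to a circle $|s| = \rho'$ with $s_0 < \rho' < |s_{\pm 1}|$; the residue theorem gives $[s^n]G_p(s) = -\operatorname{Res}_{s=s_0}\tfrac{G_p(s)}{s^{n+1}} + \tfrac{1}{2\pi\rmi}\oint_{|s|=\rho'}\tfrac{G_p(s)}{s^{n+1}}\rmd s$, and the remaining integral is $O(\rho'^{-n})$, exponentially smaller than the residue term since $\rho' > s_0$. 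Evaluating the residue at the simple pole, $\operatorname{Res}_{s=s_0}\tfrac{G_p(s)}{s^{n+1}} = \tfrac{N(s_0)}{D'(s_0)\,s_0^{n+1}}$ with $N(s_0) = p - p^2(p-1)s_0 = p\big(1-\tfrac p2\ln\tfrac{p}{p-1}\big)$ and $D'(s_0) = -2p(p-1)$; substituting $s_0^{n+1} = (2(p-1))^{-(n+1)}\big(\ln\tfrac{p}{p-1}\big)^{n+1}$ and multiplying by $n!$ yields exactly Equation~\ref{ncf_approx}. Carrying the complex poles $s_{\pm1}, s_{\pm2},\dots$ through the same residue computation produces the successive subleading corrections referred to in the text.

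The main obstacle is making the contour deformation rigorous, in particular justifying that $\tfrac{1}{2\pi\rmi}\oint_{|s|=\rho'}\tfrac{G_p(s)}{s^{n+1}}\rmd s = O(\rho'^{-n})$ with a bound uniform enough to conclude it is of strictly smaller exponential order than the $s_0$ contribution. Because $\rme^{2(p-1)s}$ grows in the right half-plane and decays in the left, one needs a standard but slightly delicate estimate of $1/D(s)$ on the expanding circles (or on a suitable grid of vertical lines), staying a fixed distance away from every $s_k$, combined with the observation that the numerator of $R_p$ grows only linearly; this is the part of the Wilf-type argument that must be spelled out carefully. The residue evaluation itself, and the verification that $1 - \tfrac p2\ln\tfrac{p}{p-1}\neq 0$ for every prime $p$ so that the leading coefficient is genuinely nonzero, are then entirely routine.
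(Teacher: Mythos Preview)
Your proposal is correct and follows exactly the approach the paper outlines: the paper itself gives only a one-paragraph sketch before the corollary, invoking the contour-integral representation $|\mathbb{NCF}(n)| = \tfrac{n!}{2\pi\rmi}\oint \tfrac{G_p(s)}{s^{n+1}}\rmd s$, appealing to the ordered-Bell-number argument in \cite{wilf94}, deforming the contour past the real simple pole to extract the leading residue, and noting that the complex poles give subleading corrections. You have simply filled in the details of that sketch (pole locations, simplicity, the residue computation, and the error term), so there is nothing to compare.
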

}

\begin{figure}
\begin{center}
\includegraphics[width=0.49\textwidth]{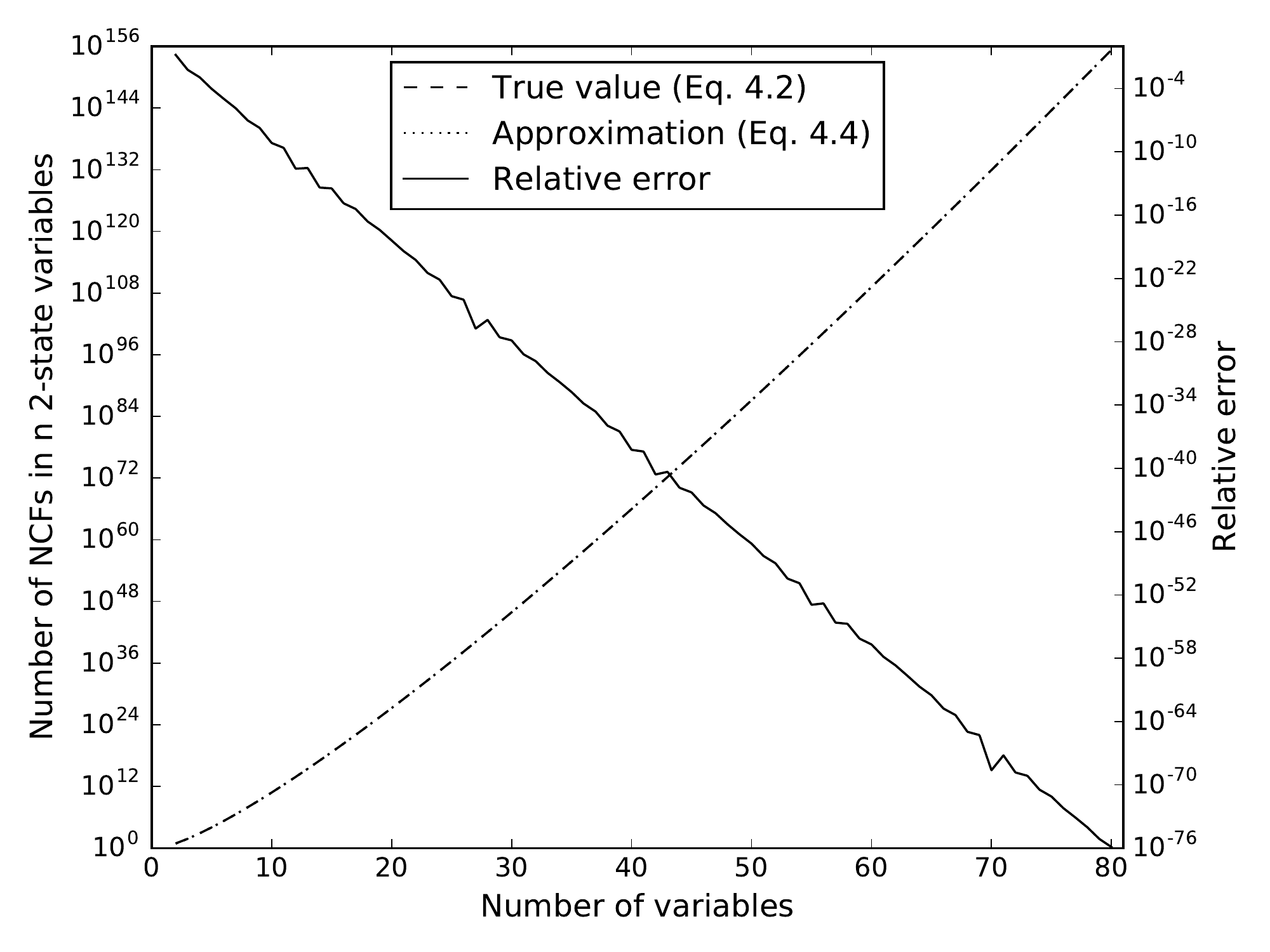}
\includegraphics[width=0.483\textwidth]{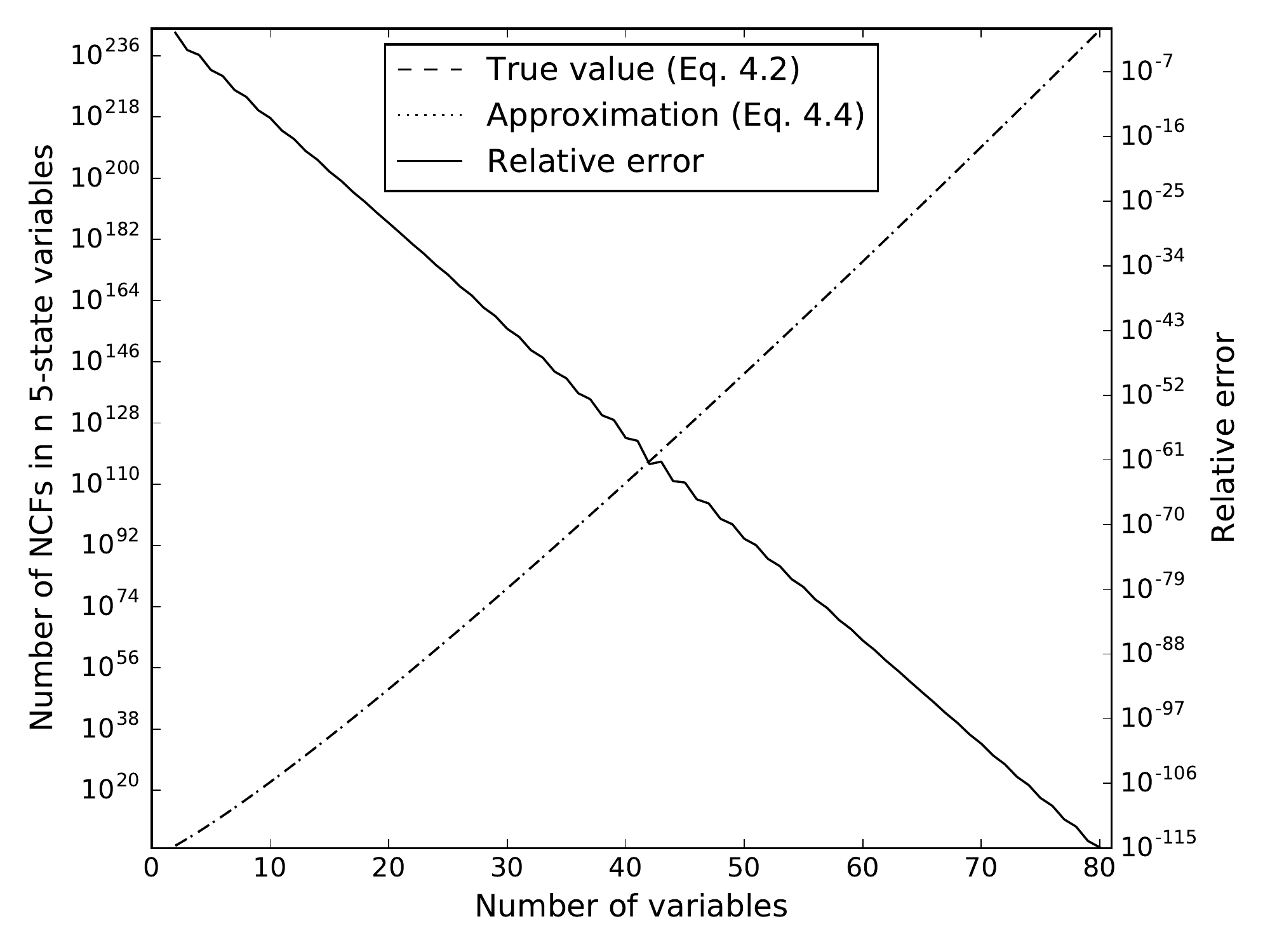}
\end{center}
\caption{The true number of NCFs (Equation \ref{eq3.2}) and the approximated number of NCFs (Equation \ref{ncf_approx}) as well as the relative error of the approximation are shown for $n=2,\ldots,80$ and $p=2$ (left panel), $p=5$ (right panel).}
\label{fig:approx}
\end{figure}

Figure \ref{fig:approx} shows the excellence of this approximation for $p=2$ and $p=5$.

We now study the number of equivalence classes of NCFs under permutation of variables. There has been recent interest in the study of various symmetries in the input variables of NCFs and the choice of representatives from different equivalence classes; see, e.g., \cite{Reichhardt,Rocha}.

\begin{defn}\label{def_equiv}
Given two functions $f(x_1,\ldots,x_n)$ and $g(x_1,\ldots,x_n)$ over $\Ff$. We call $f$ and $g$ permutation equivalent if there exists a permutation $\sigma$ such that $f(x_1,\ldots,x_n)=g(x_{\sigma(1)},\ldots,x_{\sigma(n)})$.
\end{defn}

Equivalent functions share many properties. For example, two equivalent Boolean NCFs have the same average sensitivity \cite{Yua1,Yua3} as well as the same average $c$-sensitivity (see Theorem \ref{thm_derrida}).

\begin{theorem}\label{th_equiv}
For $n\geq 2$, the number of different equivalence classes of NCFs under permutation of variables is
\[N=2^{n-1}(p-1)^{n+1}p^{n}.\]
\end{theorem}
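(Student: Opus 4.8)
The plan is to produce one canonical representative per equivalence class and then count the representatives directly. Call an NCF in \emph{standard form} if, in its unique canonical representation from Theorem~\ref{th3.1}, the index sets of the layers are the consecutive blocks $A_1=\{1,\dots,k_1\}$, $A_2=\{k_1+1,\dots,k_1+k_2\},\dots,A_r=\{n-k_r+1,\dots,n\}$. First I would show that every equivalence class contains a standard-form NCF: given any NCF, its dominance layers (Definition~\ref{def3.3}) partition the $n$ variables, and relabelling the variables so that the first block coincides with the first layer, the second block with the second layer, and so on, produces a permutation-equivalent NCF in standard form. Hence it suffices to count the standard-form NCFs and to argue that each equivalence class produces exactly one of them.

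The counting then proceeds exactly as in the proof of Theorem~\ref{thm3.2}, but \emph{without} the binomial factors that there account for the choice of which variables occupy which layer. Fix a composition $(k_1,\dots,k_r)$ of $n$ with all $k_i\geq1$. The outer factors $M_1,\dots,M_{r-1}$ are fixed by a choice of one segment for each of the $n-k_r$ variables in the first $r-1$ blocks, giving $(2(p-1))^{\,n-k_r}$ possibilities; $B_1\in\Ff$ has $p$ choices and $B_2,\dots,B_{r-1}\in\Ff\setminus\{0\}$ have $(p-1)^{r-2}$ choices; and the innermost expression $B_{r+1}M_r+B_r$ can be chosen in $2^{k_r}(p-1)^{k_r+2}$ ways if $k_r\geq2$ (Lemma~\ref{lm3.2}) and in $(p-1)^2(p-2)$ ways if $k_r=1$ (Lemma~\ref{lm3.1}). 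The case $r=1$ is the one already treated in the proof of Theorem~\ref{thm3.2} and contributes $2^n(p-1)^{n+1}p$.

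Summing over all compositions is then routine. For $r\geq2$ and $k_r\geq2$ the per-composition count collapses to $2^n(p-1)^{n+r}p$, independent of $k_r$; there are $\binom{n-1}{r-1}-\binom{n-2}{r-2}$ such compositions with $r$ parts, and summing over $r$ by the binomial theorem yields $2^n(p-1)^{n+1}(p^{n-1}-p)$. For $r\geq2$ and $k_r=1$ the per-composition count is $2^{n-1}(p-1)^{n+r-1}(p-2)p$, there are $\binom{n-2}{r-2}$ such compositions, and the analogous binomial sum yields $2^{n-1}(p-2)(p-1)^{n+1}p^{n-1}$. Adding the three groups, the terms linear in $p$ cancel and one is left with $2^{n-1}(p-1)^{n+1}p^{n-1}\bigl(2+(p-2)\bigr)=2^{n-1}(p-1)^{n+1}p^{n}$, which is the claimed value of $N$.

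The delicate step, and the one I would spend most effort on, is the \emph{uniqueness} claim of the first paragraph: that no nontrivial permutation carries one standard-form NCF to a different standard-form NCF. Any $\sigma$ with $f(x_1,\dots,x_n)=g(x_{\sigma(1)},\dots,x_{\sigma(n)})$ must respect the intrinsic dominance filtration --- it carries the canalizing variables of $g$ to those of $f$, then the second-layer variables to the second-layer variables, and so on --- so it only permutes variables within each block. I would therefore isolate a lemma describing how the canonical form of Theorem~\ref{th3.1} transforms under permutations preserving the layer partition, handling the last layer (where Proposition~\ref{prop2.1} introduces the familiar $S_n\leftrightarrow S_n^c$ ambiguity) separately from the others, and use it to conclude that the standard form is a genuine invariant of the equivalence class. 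This compatibility of the block decomposition with permutation equivalence is the real content of the theorem; the arithmetic in the preceding two paragraphs is then bookkeeping.
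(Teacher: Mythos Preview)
Your approach is essentially the paper's: both identify the number of equivalence classes with the number of NCFs whose canonical form (Theorem~\ref{th3.1}) uses a prescribed variable order, and then redo the bookkeeping of Theorem~\ref{thm3.2} with the multinomial factors stripped out. Your arithmetic is correct and matches the paper's computation line for line. You also, rightly, flag the uniqueness of the standard-form representative as the substantive step; the paper simply asserts it in one sentence and moves on.

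The trouble is that this uniqueness claim is false, so the lemma you plan to prove cannot hold. A permutation that stays inside a layer can carry one standard-form NCF to a genuinely different one: for $p=2$, $n=2$, $r=1$, take $f=Q_{\{0\}}(x_1)\,Q_{\{1\}}(x_2)$ and $g=Q_{\{1\}}(x_1)\,Q_{\{0\}}(x_2)$. Both have $A_1=\{1,2\}$ and hence are in standard form, they are distinct functions, yet the transposition of $x_1,x_2$ sends one to the other. More generally, whenever a layer has $k_i\ge 2$ and two of its segments differ, swapping those two variables produces a different standard-form NCF in the same equivalence class. What you and the paper are actually counting is therefore standard-form NCFs, not equivalence classes: direct enumeration gives $6$ classes for $(p,n)=(2,2)$ against the formula's $2^{1}\cdot1^{3}\cdot2^{2}=8$, and $108$ classes for $(p,n)=(3,2)$ against the claimed $144$. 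So this is not a gap your proposed lemma can close; the paper's proof has the identical defect, merely concealed behind its unargued opening sentence.
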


\begin{proof}
See Appendix.
\end{proof}

The number of different equivalence classes of NCFs under permutation of variables is much lower than the number of NCFs. For example, when $p=3$ and $n=2,3,4$ we get 144, 1728, 20736 equivalence classes respectively, compared to the totals of 192, 5568, 219468 NCFs; when $p=5$ and $n=2,3,4$ we get 3200, 128000, 5120000 respectively, compared to 5120, 547840, 78561280.



\section{Discussion}\label{6}
For a class of functions to be a good representative of the mechanisms at work in gene regulation, it should be sufficiently large to capture many possible logic rules that appear, but should, at the same time, be small enough to endow networks with special properties that can be discerned. In our case, this necessitates a study of the class of general NCFs.
There are a few ways to generalize the concept of Boolean nested canalization. We treated a fairly restrictive but simple generalization, in which each variable can only appear once in the canalizing order. One limitation of this approach is that some Boolean NCFs are no longer nested canalizing when generalizing them to the multistate case. For instance, MIN and MAX, which can be seen as natural generalizations of the Boolean AND and OR functions, are not multistate NCFs. Another less restrictive but more complicated generalization would allow variables to appear in the canalizing order up to $p-1$ times, each time with a different canalizing input. This approach would, however, lead to uniqueness issues of the canonical parametric polynomial form. For instance, the layer number would no longer correspond to the number of changes in the canalized output vector.

While much work remains to be done in order to fully characterize the class of multistate NCFs, this paper provides a collection of tools for
a more in-depth study of systems governed by NCFs. The presented formula of the normalized average $c$-sensitivities of a multistate NCF allows the computation of Derrida values of NCF-governed networks, a commonly used metric of network stability. In the process, we have extended the definition of Derrida value from the Boolean to the multistate context. A very appealing closed form formula for the number of NCFs in a given number of variables is derived, in terms of the Sterling numbers of the second kind. 
A canonical parametric polynomial form of NCFs is derived that has important consequences: (i) it allows the easy construction of such functions, useful for simulation studies; (ii) it helps to derive an  asymptotic formula for the portion of NCFs among all functions; (iii) this polynomial form suggests a division of variables into layers which appear related to robustness properties captured by the Derrida values. Despite recent progress in \cite{Kadelka_Derrida16}, the precise connection remains to be elucidated. Our work builds in part on prior work done in the Boolean case \cite{Abd2, Mur, Yua1}.

As in \cite{Alan,Lau}, the results in this paper show the utility of the polynomial algebra viewpoint of discrete dynamical systems in general, and Boolean networks in particular, for the study of discrete dynamical systems in biology and elsewhere. Viewing polynomial algebra as a nonlinear version of linear algebra, the connection might not be surprising, and we believe that we have only scratched the surface in finding new applications of algebra to the study of nonlinear dynamical systems. 

\section*{Acknowledgements}
CK and RL were supported by NSF Grant CMMI-0908201 and US DoD Grant W911NF-14-1-0486. YL and JA were supported by US DoD Grant W911NF-11-10166.

\bibliographystyle{elsarticle-num}
\bibliography{ncf_bib_latest}

\section*{Appendix}\label{Appendix}
\subsection*{Theorem \ref{thm_derrida}}
\begin{proof}
Let $f$ be a $\{\sigma:\mathbb{S}:\beta\}$ NCF, as in Definition \ref{def2.3}, with $n$ essential variables. Let $\mathbf x=(x_1,\ldots,x_n), \mathbf y=(y_1,\ldots,y_n) \in \Ff_p^n$ be two system configurations that differ at $c$ of the $n$ positions. Let $\tilde{\sigma}$ be the restriction of $\sigma$ on the variables where $\mathbf x$ and $\mathbf y$ differ, $$\tilde{\sigma} = \{\sigma(i) | x_{\sigma(i)} \neq y_{\sigma(i)}\}.$$ The order of the elements in $\tilde\sigma$ is the same as in $\sigma$. 


(i) As an NCF, $f$ is evaluated in an iterative process. If in the evaluation of $f(\mathbf x)$ the most important value $x_{\sigma(1)}$ is in the canalizing set $S_1$, the evaluation is completed. If not, the second most important value, $x_{\sigma(2)}$, is considered, etc. In the Boolean case, whenever $\mathbf x$ and $\mathbf y$ differ at a variable $\sigma(i)$, $x_{\sigma(i)} \in S_i$ and $y_{\sigma(i)} \not\in S_i$ or $x_{\sigma(i)} \not\in S_i$ and $y_{\sigma(i)} \in S_i$. This implies that after evaluation of $\tilde\sigma(1)$, the most important variable where $\mathbf x$ and $\mathbf y$ differ, $\mathbf x$ and $\mathbf y$ always follow different paths in the evaluation process, and all less important variables do not matter when determining the probability that $f(\mathbf x)$ and $f(\mathbf y)$ differ. This is no longer true in the multistate case. For instance, if $p=3, S_1 = \{0\}, x_{\sigma(1)} = 1, y_{\sigma(1)} = 2$, then the second most important variable $\sigma(2)$ needs to be considered even though $\mathbf x$ and $\mathbf y$ differ at the most important variable $\sigma(1)$. Let 
$$\phi_1(p):= \mathbb{P}\Big([x_{\sigma(i)} \in S_i \wedge y_{\sigma(i)} \not\in S_i] \vee [x_{\sigma(i)} \not\in S_i \wedge y_{\sigma(i)} \in S_i] \Big| x_{\sigma(i)} \neq y_{\sigma(i)} \wedge S_i \in \mathcal{S}\Big),$$
where
$$\mathcal{S} = \big\{\{0\},\{0,1\},\ldots,\{0,\ldots,p-2\},\{p-1\},\{p-2,p-1\},\ldots,\{1,2,\ldots,p-1\}\big\}$$
is the set of all possible canalizing input segments. Clearly, $| \mathcal{S} | = 2(p-1)$, and there exist two segments with one element, two with two elements, etc. Moreover, there are $\binom p2$ pairs of $x_{\sigma(i)}$ and $y_{\sigma(i)}$ so that the two values are different. For a given $S_i$,
$$\mathbb{P}\Big([x_{\sigma(i)} \in S_i \wedge y_{\sigma(i)} \not\in S_i] \vee [x_{\sigma(i)} \not\in S_i \wedge y_{\sigma(i)} \in S_i] \Big| x_{\sigma(i)} \neq y_{\sigma(i)}\Big) = \frac{|S_i|(p-|S_i|)}{\binom p2}.$$
Thus,
\begin{align*}
\phi_1(p) =\sum_{S_i \in \mathcal{S}} \frac{|S_i|(p-|S_i|)}{2(p-1)\binom p2} = \frac{\sum_{i=1}^{p-1}i(p-i)}{(p-1)\binom p2} = \frac{p+1}{3(p-1)}.
\end{align*}

(ii) If the $j$th evaluation step is needed to determine the probability that $f(\mathbf x)$ and $f(\mathbf y)$ differ, and if $x_{\sigma(j)} \neq y_{\sigma(j)}$, then the probability that $\mathbf x$ and $\mathbf y$ follow different paths in the evaluation process at step $j$, i.e., the probability that no more steps are needed, is $\phi_1(p)$. If $j=n$ is the least important variable, then $f(\mathbf x)$ and $f(\mathbf y)$ differ for sure because $b_n \neq b_{n+1}$ in Definition~\ref{def2.3}. If $j<n$, then $b_j \neq b_q$ with probability $\frac{p-1}{p}$ for any $j<q<n$, so that
\begin{align*}
\phi_2(j,n,p) := \mathbb{P}\Big(f(\mathbf x) \neq f(\mathbf y) \Big| \big[ x_{\sigma(j)} \neq y_{\sigma(j)} \big] \wedge \big[ x_{\sigma(q)} \not\in S_q \wedge y_{\sigma(q)} \not\in S_q\ \forall &q, 1\leq q<j\big] \Big) =\\ 
&= \begin{cases}\phi_1(p)\frac{p-1}p & \ \text{if}\ j<n\\\phi_1(p) & \ \text{if}\ j=n\end{cases}
\end{align*}

(iii) When determining the probability that $f(\mathbf x)$ and $f(\mathbf y)$ differ, the $j$th evaluation step is only needed if $x_{\sigma(q)} \not\in S_q \ \text{and}\ y_{\sigma(q)} \not\in S_q$ for all $1\leq q < j$. For any $q$, 
$$\mathbb{P}\big(x_{\sigma(q)} \not\in S_q \wedge y_{\sigma(q)} \not\in S_q\big) = \begin{cases}\frac 12 & \ \text{if}\ x_{\sigma(q)} = y_{\sigma(q)}\\
\frac12 \big(1-\phi_1(p)\big) & \ \text{if}\ x_{\sigma(q)} \neq y_{\sigma(q)}\end{cases}$$
If $\sigma(j)$ is the $i$th most important variable where $\mathbf x$ and $\mathbf y$ differ (i.e., if $\sigma(j) = \tilde\sigma(i)$), then
\begin{align*}
\phi_3(i,j,p) := \mathbb{P}\Big(x_{\sigma(q)} \not\in S_q \wedge y_{\sigma(q)} \not\in S_q \ \forall q, 1\leq q < j\Big| \sigma(j) = \tilde\sigma(i) \Big) &= \prod_{q=1}^{j-1} \mathbb{P}\big(x_{\sigma(q)} \not\in S_q \wedge y_{\sigma(q)} \not\in S_q \big)\\
&= \left(\frac{1-\phi_1(p)}2\right)^{i-1} \left(\frac 12\right)^{j-i}
\end{align*}

(iv) The probability that the $j$th most important variable $\sigma(j)$ is the $i$th most important variable, when only considering those variables where $\mathbf x$ and $\mathbf y$ differ, is equal to the probability that a c-subset of $\{1,2,\ldots,n\}$ contains $j$ as its $i$th lowest element. There are $\binom nc$ c-subsets of $\{1,2,\ldots,n\}$. If $j$ is the $i$th lowest element, then there are $\binom{j-1}{i-1}$ choices for the $i-1$ lower elements and $\binom{n-j}{c-i}$ choices for the $c-i$ higher elements. Thus, for $1\leq i \leq c, i \leq j \leq n-c+i$,
$$\phi_4(i,j,c,n):=\mathbb{P}\big(\sigma(j) = \tilde\sigma(i)\big) = \frac{\binom{j-1}{i-1}\binom{n-j}{c-i}}{\binom nc}$$

(v) Only variables where $\mathbf x$ and $\mathbf y$ differ matter when deciding whether $f(\mathbf x)$ equals $f(\mathbf y)$. We therefore only consider these $c$ variables in the calculation of $q(c,n)$. The probability $\phi_4(i,j,c,n)$ describes how likely the $i$th most important variable where $\mathbf x$ and $\mathbf y$ differ, $\tilde\sigma(i)$, occurs at position $j$ when considering all variables. Therefore, $1\leq i \leq c$ and $i\leq j \leq n+i-c$. Thus,
\begin{align*}
q(c,n) &= \mathbb{P}\big(f(\mathbf x) \neq f(\mathbf y) | d(\mathbf x,\mathbf y) = c\big)\\
&= \sum_{i=1}^c \sum_{j=i}^{n+i-c} \mathbb{P}\Big(\sigma(j) = \tilde\sigma(i)\Big) \cdot \mathbb{P}\Big(x_{\sigma(q)} \not\in S_q \wedge y_{\sigma(q)} \not\in S_q \ \forall q, 1\leq q < j\Big| \sigma(j) = \tilde\sigma(i) \Big) \cdot \\
&\qquad\qquad\ \ \cdot \mathbb{P}\Big(f(\mathbf x) \neq f(\mathbf y) \big| \big[ x_{\sigma(j)} \neq y_{\sigma(j)} \big] \wedge \big[ x_{\sigma(q)} \not\in S_q \wedge y_{\sigma(q)} \not\in S_q\ \forall q, 1\leq q<j\big] \Big)\\
&= \sum_{i=1}^c \sum_{j=i}^{n+i-c} \phi_4(i,j,c,n) \phi_3(i,j,p) \phi_2(j,n,p)\\
&= {\phi_4(c,n,c,n) \phi_3(c,n,p) \phi_2(n,n,p)+\sum_{i=1}^c \sum_{j=i}^{\min(n+i-c,n-1)} \phi_4(i,j,c,n) \phi_3(i,j,p) \phi_2(j,n,p)}\\
&= {\phi_1(p)\left[\frac{c2^c}{n2^n} \left(\frac{1-\phi_1(p)}2\right)^{c-1}+\frac{p-1}{p}\sum_{i=1}^c \sum_{j=i}^{\min(n+i-c,n-1)} \phi_4(i,j,c,n) \phi_3(i,j,p)\right]}\\
&= {\phi_1(p)\left[\frac{c2^c}{n2^n} \left(\frac{1-\phi_1(p)}2\right)^{c-1}+\frac{p-1}{p}\binom{n}{c}^{-1}\sum_{i=1}^c s_i(c,n) \left(\frac{1-\phi_1(p)}2\right)^{i-1} \right]},
\end{align*}
where 
$$s_i(c,n) = \sum_{j=i}^{\min(i+n-c,n-1)} \binom{n-j}{c-i} \binom{j-1}{i-1} \left(\frac{1}{2}\right)^{j-i}, \ 1\leq i \leq c$$
can be expressed in terms of hypergeometric functions
$$s_i(c,n) = \binom{n-i}{c-i} {}_2F_1\Big[i,c-n;i-n;\frac 12\Big] -\delta_{c,i}\binom{n-1}{c-1} {}_2F_1\Big[1,n;n+1-c;\frac 12\Big] .$$
\end{proof}

\subsection*{Lemma \ref{lm3.1}}
\begin{proof}
If a function $f(x)=bQ_S(x)+a$ can be written as $cQ_{S'}(x)$, then
\[f=bQ_S(x)+a=\left\{\begin{array}[c]{ll}%
a & x\in S\\
a+b & x\in S^c\end{array}\right.
=\left\{\begin{array}[c]{ll}%
0 & x\in S'\\
c & x\in {S'}^c\end{array}\right.
=cQ_{S'}(x).\]
Since $a$ and $c$ are nonzero, $a+b = 0 \Leftrightarrow a = -b$ must hold for such a function. 
Since $\Ff$ contains $p-1$ nonzero numbers, there are $p-1$ choices for $b$ and $p-2$ choices for $a$, to obtain a function that cannot be written as $cQ_{S'}(x)$. 
Moreover, there are $2(p-1)$ different segments $S$, but only half of them lead to a different function
since every function can be expressed in two different ways:
\[bQ_S(x)+a=b(1-Q_{S^c}(x))+a=-bQ_{S^c}(x)+(a+b).\]
Thus, there are $(p-1)^2(p-2)$ different functions $f=bQ_S(x)+a$ that cannot be written as $cQ_{S'}(x)$.
\end{proof}

\subsection*{Lemma \ref{lm3.2}}
\begin{proof}
(1) Assume a function $f(\xx)=b\prod_{j=1}^k Q_{S_j}(x_j)+a$ can be written as $c\prod_{j=1}^k Q_{S_j'}(x_j)$, then
\begin{align*}
b\prod_{j=1}^k Q_{S_j}(x_j)+a &= \left\{\begin{array}[c]{ll}%
a & \exists j: x_j\in S_j\\
a+b & \forall j: x_j \in S_j^c \end{array}\right. \\
&= \left\{\begin{array}[c]{ll}%
a & \xx \in (S_1 \times \Ff^{k-1}) \cup \ldots \cup (\Ff^{k-1} \times S_k)\\
a+b & \xx \in S_1^c \times \ldots \times S_k^c \end{array}\right. \\
&=\left\{\begin{array}[c]{ll}%
0 \quad \ \ \ & \xx \in ({S_1'} \times \Ff^{k-1}) \cup \ldots \cup (\Ff^{k-1} \times {S_k'})\\
c & \xx \in {S_1'}^c \times \ldots \times {S_k'}^c \end{array}\right.\\
&=c\prod_{j=1}^k Q_{S_j'}(x_j).
\end{align*}
Since $a,c\neq 0$, $a+b=0$ must hold. Hence, 
\[S_1^c \times \cdots \times S_k^c = ({S_1'} \times \Ff^{k-1}) \cup \cdots \cup (\Ff^{k-1} \times {S_k'}).\]
This last statement is however impossible. Thus, there is no function $f(\xx)=b\prod_{j=1}^k Q_{S_j}(x_j)+a$ that can be written as $c\prod_{j=1}^k Q_{S_j'}(x_j)$.

(2) The nonzero constants $a,b$ can be arbitrarily chosen, with $p-1$ choices each. Contrary to the previous lemma, each choice of segments $S_1, \ldots, S_k$ leads to a different function because $S_1^c \times \ldots \times S_k^c \neq ({S_1} \times \Ff^{k-1}) \cup \ldots \cup (\Ff^{k-1} \times {S_k})$ and because $a \neq a+b$. For each segment there are $2(p-1)$ choices, so that altogether there are $(p-1)(p-1)(2(p-1))^k = 2^k(p-1)^{k+2}$ different functions of the form $b\prod_{j=1}^k Q_{S_j}(x_j)+a$.
\end{proof}

\subsection*{Theorem \ref{th_equiv}}
\begin{proof}
In order to find the number of different equivalence classes of NCFs, we need the following combinatorial result \cite[Page 70]{Cha}:

Given $n$, $r$ and $s_i$, $i=1,\ldots,r$ and $s=s_1+\cdots+s_r\leq n$. 
Then the number of integer solution of the equation $k_1+\cdots+k_r=n$, where $k_i\geq s_i$, is
\[\sum_{\substack{k_{1}+\cdots+k_{r}=n\\k_{i}\geq s_i, s=s_1+\cdots +s_r\leq n}}1 = \binom{r+n-s-1}{r-1}.\]

The number of different equivalent classes of NCFs equals the number of different NCFs with a fixed canalizing variable order $\sigma$ in Equation \ref{eq3.1}. Thus, we can follow the same enumerative approach as we did in the proof of Theorem \ref{thm3.2}, except that we do not consider the permutation of the variables. Hence, 

\begin{align*}
N&=2^{n-1}p(p-2) \sum_{\substack{r=2}}^{n}(p-1)^{n+r-1}\sum_{\substack{k_{1}+\cdots+k_{r-1}=n-1\\k_{i}\geq1,i=1,\ldots,r-1 }}1\\
&\qquad +2^{n}p\sum_{\substack{r=1}}^{n-1}(p-1)^{n+r}\sum_{\substack{k_{1}+\cdots+k_{r}=n\\k_{i}\geq1,i=1,\ldots,r-1,k_{r}\geq2}}1\\
&=2^{n-1}p(p-1)^{n+1} \Big[(p-2)\sum_{\substack{r=2}}^{n}(p-1)^{r-2}\binom{n-2}{r-2}+2\sum_{\substack{r=1}}^{n-1}(p-1)^{r-1}\binom{n-2}{r-1}\Big]\\
&=2^{n-1}p(p-1)^{n+1} \Big[(p-2)p^{n-2} + 2p^{n-2}\Big]\\
&=2^{n-1}(p-1)^{n+1}p^{n},
\end{align*}
where we used the above mentioned lemma to eradicate the inner sums in the first equality and the binomial theorem to simplify the sums in the second equality.
\end{proof}

\end{document}